\documentclass[a4paper,11pt]{article}
\usepackage[top=2.5cm,bottom=2.5cm,left=2.2cm,right=2.2cm]{geometry}
\usepackage{latexsym,amsmath,amssymb,amsfonts,epsfig,psfrag,url,graphics,ifpdf,multicol}
\usepackage{eepic,color,colordvi,amscd,amsthm,makecell,multirow,enumerate,enumitem}
\usepackage[section]{algorithm}
\usepackage{algpseudocode}
\usepackage[numbers,sort&compress]{natbib}
\usepackage{indentfirst,graphics,epsfig}
\usepackage{caption}
\usepackage{graphicx}
\usepackage{mathrsfs}
\usepackage{epstopdf}
\usepackage[T1]{fontenc}
\usepackage{subcaption}
\usepackage{newtxtext}
\usepackage{booktabs}
\usepackage{csquotes}

\usepackage{latexsym,bm}

\newtheorem{theo}{Theorem}[section]
\newtheorem{lem}[theo]{Lemma}
\newtheorem{coro}[theo]{Corollary}

\newtheorem{prop}[theo]{Proposition}

 \theoremstyle{definition}

\theoremstyle{remark}
\newtheorem{rema}[theo]{Remark}

\begin{document}
\title{An explicit basis for the extended Catalan arrangement of type $D_\ell$}

\author{Weili Guo$^{a}$ \footnote{Corresponding author: guowl@mail.buct.edu.cn (W. Guo).}, ~~Yupeng Wang$^{a}$, ~~Zixuan Wang$^{b}$\\[2mm]
\small $^{a}$College of Mathematics and Physics,
\small Beijing University of Chemical Technology, Beijing, 100029, P. R. China\\
\small $^{b}$Center for University Mathematics Teaching,
\small Nanjing Forestry University, Nanjing, 210037, P. R. China}
\date{}

\maketitle \vspace{0.25cm}

\noindent {\bf Abstract}: \small

Motivated by Kawanoue's construction with the finite-sum formulas, 
we construct an explicit homogeneous basis 
for the derivation module 
of the cone 
of the extended Catalan arrangement of type $D_{\ell}$.  
We further show that the restriction of this basis at infinity coincides, up to a nonzero constant, with the integral basis constructed by Feigin-Wang-Yoshinaga. 
This identifies the geometric motivation behind the parameter choices.

\vspace{2mm}

\noindent {\textbf{Key words}}:~{\small Hyperplane, Catalan arrangement, Derivation module}

\vspace{6mm}

\baselineskip=0.25in

\section{Introduction}

\vspace{3mm}

Let $V$ be an $\ell$-dimensional vector space over the field $\mathbb{C}$, 
and $V^*$ be its dual space equipped with a basis $\left\{x_1,x_2,\cdots,x_\ell\right\}$. 
Let $S=\operatorname{Sym}(V^*)$ be the symmetric algebra of $V^*$. 
With respect to the chosen basis, 
we have the natural isomorphism $S\cong\mathbb{C}\left[x_1,x_2,\cdots,x_\ell\right]$. 
We denote by $\operatorname{Der}(S)$ the set of $\mathbb{C}$-linear maps $\theta:S\to S$ such that $\theta(fg)=f\theta(g)+g\theta(f)$ for all $f,g\in S$.  
The free $S$-module $\operatorname{Der}(S)$ has a basis $\partial x_1,\partial x_2,\dots,\partial x_\ell$. 
An element of $\operatorname{Der}(S)$ is called a derivation.

\vspace{1mm}

A hyperplane is an affine subspace of codimension one in $V$, 
written as  $H=\{v\in V \mid \alpha_H (v) = c \}$,
where $\alpha_H\in V^*\setminus\left\{0\right\}$ and $c\in \mathbb{C}$. 
A hyperplane arrangement $\mathcal{A}$ is a finite set of hyperplanes. 
We call $\mathcal{A}$ a central arrangement if all hyperplanes pass through the origin. 
For the central arrangement $\mathcal{A}$,
define the $S$-module $D(\mathcal{A})$ by
\begin{equation*}
	D(\mathcal{A})=\left\{\theta\in\operatorname{Der}(S)\mid
	\theta(\alpha_H)\in\alpha_HS,\text{~for any~}H\in\mathcal{A}\right\}.
\end{equation*}	
\noindent The arrangement $\mathcal{A}$ is called free if $D(\mathcal{A})$ is a free module. A nonzero element $\theta\in D(\mathcal{A})$ is homogeneous of polynomial degree $p$ if $\theta=\sum_{i=1}^{\ell}f_i\partial_{x_i}$, where each $f_i$ is a homogeneous polynomial of degree $p$ for $i=1,2,\dots,\ell$. In this case we write $\operatorname{pdeg}\theta=p$. For a free module $D(\mathcal{A})$ with a homogeneous basis $\left\{\theta_1,\theta_2,\dots,\theta_\ell\right\}$, we call $\operatorname{pdeg}\theta_1,\operatorname{pdeg}\theta_2,\dots,
\operatorname{pdeg}\theta_\ell$ the exponents of $\mathcal{A}$ and write 
\begin{equation*}
	\begin{split}
		\operatorname{exp}(\mathcal{A})=
		\left\{
		\operatorname{pdeg}\theta_1,\operatorname{pdeg}\theta_2,\dots,
		\operatorname{pdeg}\theta_\ell
		\right\}.
	\end{split}
\end{equation*}
For more details, see~\cite{P. Orlik-1992, G. Ziegler-1989}.
\vspace{1mm}

Let $\Phi$ be a crystallographic irreducible root system. 
Denote by $\Phi^+$ its positive system. 
For any $\alpha\in\Phi^+$ and $k\in\mathbb{Z}$, the extended Catalan arrangement can be defined by 
\begin{equation*}
	\operatorname{Cat}(\Phi,m)=
	\left\{ 
	\alpha-k=0
	\mid
	\alpha\in\Phi^+,
	-m\leq k\leq m
	\right\}.
\end{equation*}
The term \enquote{Catalan arrangement} arises from the result proved in~\cite{A. Postnikov-2000} that the number of chambers in the fundamental region of a type $A$ root system is equal to the Catalan number. After adjoining a coordinate $z$, denote by $\mathcal{C}_{\ell,m}=c\operatorname{Cat}(D_\ell,m)$ the cone of the extended Catalan arrangement of type $D_\ell$ for $m\geq0$, and we write its defining polynomial as
\begin{equation}\label{eq:Q}
	Q_{\ell,m}=z\prod_{1\leq i<j\leq\ell}
	\prod_{k=-m}^{m}(x_i-x_j-kz)(x_i+x_j-kz).
\end{equation}
Thus its defining hyperplanes are $z=0$ and $\alpha-kz=0$, where $\alpha\in\Phi^+(D_\ell)$ and $-m\leq k\leq m$.

There has been extensive research on Catalan arrangements. 
Edelman and Reiner \cite{P. H. Edelman-1996} proposed a conjecture about the freeness of the cone of the extended Catalan arrangement. 
Yoshinaga \cite{M. Yoshinaga-2004} used algebraic geometry methods to show its freeness. 
Recently, the explicit construction of bases attracted considerable attention from many researchers. 
Suyama and Yoshinaga \cite{D. Suyama-2021} constructed an explicit basis of the extended Catalan arrangement of type $A_{\ell-1}$ based on Bandlow-Musiker's integral formula.
Gao and Si \cite{R. Gao-2026} also provided a basis construction
by Bernoulli polynomial 
for this type.  
Feigin, Wang and Yoshinaga \cite{M. Feigin-2025} 
constructed a set of derivations 
in view of their integral expression of the basis of Coxeter arrangement of type $B_2$, 
and gave a conjecture about an explicit basis of the extended Catalan arrangement of type $B_2$. 
Kawanoue \cite{H. Kawanoue-2026} proved the conjecture and generalized the construction to the extended Catalan arrangement of type $B_\ell$.
The construction for type 
$D_\ell$ remains open.

\vspace{1mm}

In this paper, 
we fix $m$ and $\ell$.
Inspired by Kawanoue's work in~\cite{H. Kawanoue-2026}, we define the following polynomials and give an explicit basis of the cone of the extended Catalan arrangement of type $D_\ell$.

\vspace{1mm}

For an integer $a$, we define
\begin{equation}\label{eq:P}
	P(t,a)=
	\begin{cases}
		\displaystyle\prod_{q=-a}^{a}(t+q), &a\geq0,\\
		P(t,-a-1)^{-1}, &a<0.
	\end{cases}
\end{equation}
In particular $P(t,-1)=\frac{1}{t}$ and we assume empty products equal $1$.

\vspace{1mm}

For each $i$, we define the remaining variables in their natural order as $\mathbf{y}=(y_1,\dots,y_{\ell-1})=(x_1,\dots,\widehat{x_i},\dots,x_\ell)$. For $\mathbf{s}\in\left\{0,\dots,m\right\}^{\ell-1}$, write $S_j=s_1+\cdots+s_j$ and $S_0=0$. The regular coefficients, for $0\leq r\leq\ell-2$, are
\begin{equation}\label{eq:regular}
	p_{i,r}^{(m)}=\sum_{\mathbf{s}\in\left\{0,\dots,m\right\}^{\ell-1}}
	\frac{(-1)^{S_{\ell-1}}P(x_i,r+S_{\ell-1})}
	{\binom{r+S_{\ell-1}+\frac12}{m+1}}\cdot
	\prod_{j=1}^{\ell-1}\left[
	\binom{m}{s_j}\prod_{q=r-m+S_j+1}^{r+S_{j-1}}(y_j^2-q^2)\right].
\end{equation}
To write the exceptional coefficients without rational functions, define
\begin{equation*}
	R_N(t)=
	\begin{cases}
		1,&N=0,\\
		t^2\displaystyle\prod_{q=1}^{N-1}(t^2-q^2),&N\geq1.
	\end{cases}
\end{equation*}
Then write
\begin{equation}\label{eq:exceptional}
	p_{i,*}^{(m)}=\left(\prod_{j\ne i}x_j\right)
	\sum_{\mathbf{s}\in\left\{0,\dots,m\right\}^{\ell-1}}
	\frac{(-1)^{S_{\ell-1}}R_{S_{\ell-1}}(x_i)}
	{\binom{S_{\ell-1}-\frac12}{m+1}}\cdot
	\prod_{j=1}^{\ell-1}\left[
	\binom{m}{s_j}
	\prod_{q=-m+S_j}^{S_{j-1}-1}(y_j^2-q^2)\right].
\end{equation}
It is easy to check that every scalar denominator is nonzero, since its numerator is a product of nonzero half-integers. And these expressions are finite sums of polynomials, with $(m+1)^{\ell-1}$ summands per coefficient. 

\vspace{1mm}

Assume $d_r=2m(\ell-1)+2r+1$, $d_*=2m(\ell-1)+\ell-1$ and $X=\prod_{i=1}^{\ell}x_i$, we define the homogeneous polynomial by
\begin{equation*}
	f_{i,r}(x,z)=z^{d_r}p_{i,r}^{(m)}(\frac{x}{z}),\qquad
	f_{i,*}(x,z)=z^{d_*}p_{i,*}^{(m)}(\frac{x}{z}),
\end{equation*}
and we define the derivations by
\begin{equation}	
	\theta_r=\sum_{i=1}^{\ell}f_{i,r}\partial_{x_i},\qquad\label{eq:theta}
	\theta_*=\sum_{i=1}^{\ell}f_{i,*}\partial_{x_i},\qquad
	\theta_E=z\partial_{z}+\sum_{i=1}^{\ell}x_i\partial_{x_i}.
\end{equation}
\vspace{1mm}

With the above definitions, the main theorem of this paper is shown as follows.

\vspace{1mm}

\begin{theo}\label{thm:main}
	For $\ell\geq4$ and $m\geq0$, the ordered derivations
	\begin{equation*}
		(\theta_E,\theta_0,\theta_1,\dots,\theta_{\ell-2},\theta_*)
	\end{equation*}
	is a homogeneous basis of $D(\mathcal{C}_{\ell,m})$. In the coordinate order $(z,x_1,\dots,x_\ell)$, its coefficient determinant equals $C_{\ell,m}Q_{\ell,m}$, where
	\begin{equation}\label{eq:constant}
		C_{\ell,m}=
		\frac{\displaystyle(-1)^{m\frac{\ell(\ell-1)}{2}+\frac{(\ell-1)(\ell-2)}{2}}}
		{\displaystyle\binom{-\frac12}{m+1}
			\prod_{n=1}^{\ell-1}\binom{(m+1)n-\frac12}{m+1}}
		\ne0.
	\end{equation}
	And the exponents of $\mathcal{C}_{\ell,m}$ are  $\operatorname{exp}(\mathcal{C}_{\ell,m})=\left\{1,d_0,d_1,\dots,d_{\ell-2},d_*\right\}$.
\end{theo}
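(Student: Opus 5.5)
The plan is to use Saito's criterion. If $\theta_E,\theta_0,\dots,\theta_{\ell-2},\theta_*$ all lie in $D(\mathcal{C}_{\ell,m})$, and their coefficient determinant equals a nonzero constant times $Q_{\ell,m}$, then they form a basis. The stated exponents then follow at once from the polynomial degrees $1,d_r,d_*$.

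As a consistency check, the degrees must add up to $\deg Q_{\ell,m}=1+(2m+1)\ell(\ell-1)$. Indeed,
\[
1+\sum_{r=0}^{\ell-2}\bigl(2m(\ell-1)+2r+1\bigr)+2m(\ell-1)+\ell-1=1+2m\ell(\ell-1)+(\ell-1)^2+\ell-1,
\]
and this equals $1+(2m+1)\ell(\ell-1)$, as required.

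\textbf{Step 1: tangency.} Because the module is graded, it suffices to dehomogenize at $z=1$. For each root $x_i\mp x_j$ and each $|k|\le m$ we must show that $p_{i,r}\mp p_{j,r}$ vanishes on $x_i\mp x_j=k$. The same must hold for $p_{\cdot,*}$. The hyperplane $z=0$ is automatic, since no derivation has a $\partial_z$ term apart from $\theta_E$.

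\begin{enumerate}[label=(\alph*)]
\item I first rewrite $p_{i,r}$ in a Kawanoue-style form: a sum over $\mathbf{s}$ of $P(x_i,N)$ times products of shifted factorial polynomials $\prod_q(y_j^2-q^2)$ in the other variables. The coefficient $1/\binom{N+1/2}{m+1}$ should arise from a finite-difference identity, namely $\Delta^{m+1}$ applied to $P$ in the index $N$.
\item Since every factor depends on $y_j^2$, invariance under $x_j\mapsto -x_j$ for $j\ne i$ is immediate. Also $P(-t,a)=-P(t,a)$, so $p_{i,r}$ is odd in $x_i$. Therefore the condition for $x_i+x_j=k$ reduces to the one for $x_i-x_j=k$ after the sign change $x_j\to -x_j$. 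The same holds for $p_{i,*}$: the factor $\prod_{j\neq i}x_j$ combined with $R_N(x_i)$, which is even, gives the right parity.
\item By symmetry under permutations it suffices to treat $i=1$, $j=2$, and to substitute $x_1=x_2+k$. We then need $p_{1,r}(x_2+k,x_2,\dots)=p_{2,r}(x_2+k,x_2,\dots)$.
\end{enumerate}

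The plan for (c) is to sum over the two indices $s_1$ (the one attached to $x_2$ in $p_{1}$, respectively to $x_1$ in $p_2$) with the remaining indices frozen. One then has to show that the two partial sums agree as polynomials whenever $x_1-x_2=k\in[-m,m]$. The mechanism should be a telescoping or Vandermonde-type identity. Indeed, $P(t,a)\prod_{q}(u^2-q^2)$ with $t-u=k$ can be rewritten as a product of consecutive linear factors in which the roles of $t$ and $u$ can be exchanged, and $\binom{m}{s}$ weights telescope via Chu--Vandermonde. For the negative-index convention $P(t,a)=P(t,-a-1)^{-1}$, the same identity should be checked formally as rational functions. The sum nevertheless remains polynomial, which I will verify by showing that the poles cancel pairwise in $s$.

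This step is the main obstacle, and I would first verify it for $\ell=2$ with $m$ small before attempting the general combinatorial identity.

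\textbf{Step 2: determinant.} Every $\theta$ lies in $D$, so the determinant is divisible by $Q_{\ell,m}$. The degrees agree with $\deg Q_{\ell,m}$ by the sum computed above, so the determinant is $c\,Q_{\ell,m}$ for some constant $c$. To find $c$, I set $z=0$. Expanding along the $z$ column, only $\theta_E$ contributes, so the determinant becomes $z$ times the $\ell\times\ell$ determinant of the top-degree parts. Only the terms of maximal $x$-degree survive here, i.e. the leading parts of $p_{i,r}$ and $p_{i,*}$.

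Those leading parts are homogeneous sums in which $P(x_i,N)\to x_i^{2N+1}$ and $\prod(y_j^2-q^2)\to y_j^{2(\cdot)}$. I expect them to be, up to scalars, the Feigin--Wang--Yoshinaga integral basis at infinity. That basis gives the Coxeter-multiarrangement determinant $\prod_{i<j}(x_i^2-x_j^2)^{2m+1}$ times a constant.

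Comparing the coefficient of one chosen monomial, the lexicographically leading one $x_1^{a_1}\cdots x_\ell^{a_\ell}$, should isolate a triangular contribution. This would give the product of $1/\binom{(m+1)n-1/2}{m+1}$ over $n=1,\dots,\ell-1$ together with the exceptional factor $1/\binom{-1/2}{m+1}$ and the sign $C_{\ell,m}$. That $C_{\ell,m}\neq0$ is clear, since each binomial is a product of nonzero half-integers. The assumption $\ell\ge4$ is used to keep the degrees $d_r$ and $d_*$ distinct from one another, so that the triangularity argument applies cleanly.
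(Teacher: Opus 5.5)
Your overall frame is the paper's: Saito's criterion, $Q_{\ell,m}\mid\Delta$, the degree count (which you computed correctly), then the constant. The genuine gap is Step 1, where all the work lies. You do not prove tangency, and the mechanism you propose fails as stated.

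In the natural ordering, $s_1$ sits in the first auxiliary slot. The factors $\prod_{q=r-m+S_j+1}^{r+S_{j-1}}(y_j^2-q^2)$ for $j\ge2$ depend on $s_1$ through $S_j,S_{j-1}$. So the $s_1$-partial sums with $s_2,\dots,s_{\ell-1}$ frozen are not a two-variable object, and they need not agree. Concretely, take $\ell=3$, $m=1$, $r=0$. The $s_2=0$ part of $p_{1,0}$ is $-8x_1x_2^2x_3^2-\frac83x_1(x_1^2-1)(x_3^2-1)$, and that of $p_{2,0}$ is its $x_1\leftrightarrow x_2$ swap. On $x_1=x_2+1$ their difference is $8x_2(x_2+1)\neq0$; it is cancelled only by the $s_2=1$ parts, which contribute $-8x_2(x_2+1)$.

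Your reduction ``by symmetry under permutations'' to $(i,j)=(1,2)$ also silently assumes that $H_{\ell-1,m,r}(x_i;\mathbf y)$ is symmetric in $\mathbf y$. Since the auxiliary variables are listed in natural order, this is not visible from the formula. It is a nontrivial input: Kawanoue's symmetry, extended in Lemma~\ref{lem:delete} to all integral $u$ so that it also covers the exceptional column. With that symmetry, you could put the partner variable in the \emph{last} slot, and your frozen partial sum would then be exactly $H_{m,r+S}(x_1;x_2)$ times a factor common to both sides.

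The paper supplies the missing ideas as follows. Symmetry lets one peel off $x_d$, $d\notin\{b,c\}$, by the deletion formula \eqref{eq:delete}, whose multiplier is independent of the principal variable. Induction on $\ell$ then reduces everything to the two-variable statements $P(x\pm y,m)\mid H_{m,r}(x;y)\pm H_{m,r}(y;x)$ for \emph{all} $r\ge0$, plus the analogue for $xyH_{m,-1}$; the shifted parameters $r+k$ leave the range $r\le\ell-2$, which is why all $r\ge0$ are needed. These statements are not proved by a direct identity at each $x-y=k$. They are proved by induction on $m$:
\begin{enumerate}[label=(\roman*)]
\item the recurrence \eqref{eq:rec-r}, linking $H_{m,r}$ to $H_{m-1,r}$ and $H_{m-1,r+1}$, gives divisibility by $P(x+y,m-1)$;
\item only the two new factors $x+y\pm m$ are then checked, using the closed form of $H_{m,r}(x;-x-m)$ obtained from Lemma~\ref{lem:sum} together with parity.
\end{enumerate}
Your telescoping/Chu--Vandermonde identity for interior $k$ is left as a hope. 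So Step 1 as written does not show that the $\theta$'s are logarithmic.

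Step 2 is essentially sound. Using the Feigin--Wang--Yoshinaga basis at $z=0$ to get $c\neq0$ is a legitimate alternative route, once the top-degree identification (Proposition~\ref{prop:top}) is actually proved. The constant then comes from the lex-leading term as in Proposition~\ref{prop:det}. However, your claim that $\ell\ge4$ keeps the degrees distinct is false: $d_*=d_{(\ell-2)/2}$ for every even $\ell$, and for $\ell=4$, $m=1$ both equal $9$. The triangularity argument does not need distinct degrees. What it needs is that a unique permutation maximizes the lex-leading monomial, and $\ell\ge4$ plays no role there.
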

The assumption $\ell\geq4$ follows the usual convention for irreducible root systems of type $D$. The formulas and proofs remain valid for $\ell=2,3$, corresponding to $D_2\cong A_1\times A_1$ and $D_3\cong A_3$, respectively.

\vspace{1mm}

In this paper, we first investigate the properties of the coefficient polynomials associated with derivations in section 2. Then we compute the coefficient determinant and apply Saito's criterion to complete the proof of the basis theorem in section 3. Next we identify the restrictions of the constructed derivations at infinity with the integral basis of Feigin-Wang-Yoshinaga in section 4. Finally, we compute an explicit example for the case $\mathcal{C}_{4,1}$.

\vspace{3mm}

\section{Preliminaries}

\vspace{3mm}

\subsection{Finite-Sum Identities and Extension to Integer Parameters}

\vspace{3mm}

In this part, we introduce Kawanoue's finite sums and extend their identities to integer parameters.

\begin{lem}\label{lem:ratio}
	Let $a,b\in\mathbb{Z}$ and $a\geq b$, then we have
	\begin{equation*}
		\frac{P(t,a)}{P(t,b)}=\prod_{q=b+1}^{a}(t^2-q^2),
	\end{equation*}
	and $P(-t,a)=-P(t,a)$ for every integer $a$.
\end{lem}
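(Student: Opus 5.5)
The proof is a case analysis on the signs of $a$ and $b$, using only definition \eqref{eq:P}. The basic observation is that for $a\ge 0$ the factors of $P(t,a)$ pair up as $(t+q)(t-q)=t^2-q^2$ for $1\le q\le a$, and the unpaired factor $q=0$ is $t$. Hence
\begin{equation*}
P(t,a)=t\prod_{q=1}^{a}(t^2-q^2)\qquad(a\ge 0),
\end{equation*}
and in particular $P(t,0)=t$. To cover negative indices, I would extend this to the statement that $P(t,a)=t\prod_{q=1}^{a}(t^2-q^2)$ for every integer $a$. For this to hold at negative $a$, the product must be read with the signed convention $\prod_{q=b+1}^{a}=\bigl(\prod_{q=a+1}^{b}\bigr)^{-1}$ when $a<b$, so that the empty product at $a=0$ and the inverted products at $a<0$ are covered uniformly. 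With this convention, the signed product telescopes: $\prod_{q=1}^{a}\big/\prod_{q=1}^{b}=\prod_{q=b+1}^{a}$ for all integers $a,b$. Dividing the two instances of the formula then gives the ratio identity immediately, the factors $t$ cancelling.

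So the real work is to check the formula for $a<0$. Write $a=-c-1$ with $c\ge 0$. By definition $P(t,a)=P(t,c)^{-1}=t^{-1}\prod_{q=1}^{c}(t^2-q^2)^{-1}$. We want this to equal $t\prod_{q=1}^{-c-1}(t^2-q^2)$. Under the signed convention, that product is $\prod_{q=-c}^{0}(t^2-q^2)^{-1}$. Its $q=0$ factor is $t^{2}$, and its factors with $q=-1,\dots,-c$ coincide with those for $q=1,\dots,c$ because only $q^2$ appears. So the right side is $t\cdot t^{-2}\prod_{q=1}^{c}(t^2-q^2)^{-1}$, which matches the left side. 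This short computation is the only subtle point, and the main care needed is in the bookkeeping of the $q=0$ factor, which is exactly what the offset $-a-1$ in the definition is designed to handle.

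Equivalently, one can avoid the signed convention and check the ratio identity directly in three cases: $a\ge b\ge0$ (plain factor cancellation), $a\ge0>b$, and $0>a\ge b$. In the case $a\ge 0>b$ with $b=-c-1$, we get $P(t,a)/P(t,b)=P(t,a)P(t,c)=t^2\prod_{1}^{a}\prod_{1}^{c}$. This matches $\prod_{q=-c}^{a}(t^2-q^2)$, since the range $q=-c,\dots,a$ contains $q=0$, which contributes $t^2$, together with the squares $1,\dots,c$ and $1,\dots,a$. The case $0>a\ge b$ reduces, after inverting, to the nonnegative case with the indices $-b-1\ge -a-1$, followed by the reflection $q\mapsto -q$.

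Finally, oddness follows from the product formula. For $a\ge0$ the expression $t\prod(t^2-q^2)$ is odd in $t$. For $a<0$, $P(t,a)$ is the reciprocal of an odd function, hence also odd. Therefore $P(-t,a)=-P(t,a)$ for every integer $a$.
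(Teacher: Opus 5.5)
Your proof is correct and is essentially the paper's argument. The paper checks the one-step recurrence $P(t,c)/P(t,c-1)=t^2-c^2$ for every integer $c$ (the $c=0$ case being $t/t^{-1}=t^2$) and telescopes; your all-integer closed form $P(t,a)=t\prod_{q=1}^{a}(t^2-q^2)$, read with signed products, is that same recurrence telescoped from $P(t,0)=t$, and your oddness argument matches the paper's.
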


\vspace{1mm}

\begin{proof}
	According to the definition~\eqref{eq:P}, we have $\frac{P(t,c)}{P(t,c-1)}=t^2-c^2$ for every integer $c$, including $c=0$ and negative $c$. Multiplying the recurrence over $c=b+1,\dots,a$ and cancelling the intermediate factors, we can get the desired identity. For nonnegative indices, oddness follows directly from the product formula. For negative indices, it follows from the reciprocal definition.
\end{proof}

\vspace{1mm}

For convenience, we adopt Kawanoue's finite sums in~\cite{H. Kawanoue-2026}. For $\nu,m\geq0$ and integral $u$, write
\begin{equation}\label{eq:h}
	h_{\nu,m,u}(x;\mathbf{y})=
	\sum_{0\leq s_1,\dots,s_\nu\leq m}
	\frac{(-1)^{S_\nu}P(x,m+u+S_\nu)}{\binom{m+u+S_\nu+\frac12}{m+1}}
	\prod_{j=1}^{\nu}\binom{m}{s_j}
	\frac{P(y_j,u+m+S_{j-1})}{P(y_j,u+S_j)}.
\end{equation}
Obviously, $h_{0,m,u}(x;\mathbf{y})=\frac{P(x,m+u)}{\binom{m+u+\frac12}{m+1}}$. The finite sum $h_{\nu,m,u}$ is defined for all integers $u$ here and extends Kawanoue's finite sum. For some negative $u$, the sum maybe a rational function rather than a polynomial. Let
\begin{equation}\label{Hh}
	H_{\nu,m,r}=h_{\nu,m,r-m}\quad(r\geq-1),\qquad H_{m,r}=H_{1,m,r}.
\end{equation}
By lemma~\ref{lem:ratio}, we can identify \eqref{eq:regular} with
$H_{\ell-1,m,r}(x_i;\widehat{x_i})$ for $r\geq0$. Moreover, since
$R_N(t)=tP(t,N-1)$ even for $N=0$, the equation \eqref{eq:exceptional} is same as
$XH_{\ell-1,m,-1}(x_i;\widehat{x_i})$.

\vspace{1mm}

The expression $P(x,m+u)$ is not a fixed rational function of $u$ since the number of its factors depends on $u$. Therefore, the rational-function identity argument cannot be applied directly to this expression before this factor is removed. Next, we normalize the expression as
\begin{equation}\label{eq:normalized}
	\begin{split}
		\frac{h_{\nu,m,u}}{P(x,m+u)}
		=&\sum_{\mathbf s}
		\frac{(-1)^{S_\nu}}{\binom{m+u+S_\nu+\frac12}{m+1}}
		\prod_{a=1}^{S_\nu}\left(x^2-(m+u+a)^2\right)\\
		&\cdot\prod_{j=1}^{\nu}\left[\binom m{s_j}
		\prod_{a=s_j+1}^{m}\left(y_j^2-(u+S_{j-1}+a)^2\right)\right].
	\end{split}
\end{equation}
For fixed $\nu,m$, the right side is a rational function of $u,x,\mathbf y$ with fixed product lengths. Its scalar denominators do not vanish at integral $u$. A rational identity between such expressions which holds for all sufficiently large integer $u$ therefore extends to all integral $u$ by clearing denominators.

\vspace{1mm}

\begin{lem}\label{lem:rec}
	For $m\geq0$ and $u\in\mathbb{Z}$, we have
	\begin{equation}\label{eq:rec-u}
		\frac{2u-1}{2m+4}h_{1,m+1,u-1}
		=\left[x^2+y^2-(m+u+1)^2-u^2\right]h_{1,m,u}-2h_{1,m,u+1}.
	\end{equation}
	Equivalently, for $m\geq1$ and $r\geq-1$, we have
	\begin{equation}\label{eq:rec-r}
		\frac{2r-2m+1}{2m+2}H_{m,r}(x;y)
		=\left[x^2+y^2-(r+1)^2-(r-m+1)^2\right]H_{m-1,r}(x;y)
		-2H_{m-1,r+1}(x;y).
	\end{equation}
\end{lem}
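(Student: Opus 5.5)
First I check that \eqref{eq:rec-r} is just \eqref{eq:rec-u} rewritten. Put $u=r-m+1$ and replace $m$ by $m-1$ in \eqref{eq:rec-u}. By \eqref{Hh}, $h_{1,m,u-1}=h_{1,m,r-m}=H_{m,r}$, $h_{1,m-1,u}=h_{1,m-1,r-(m-1)}=H_{m-1,r}$ and $h_{1,m-1,u+1}=H_{m-1,r+1}$. The coefficients also match: $(2u-1)/(2(m-1)+4)=(2r-2m+1)/(2m+2)$, and $(m-1+u+1)^2=(r+1)^2$, $u^2=(r-m+1)^2$. So it suffices to prove \eqref{eq:rec-u}. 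The plan is to prove it for all sufficiently large integers $u$, where every $P$ is an honest product, and then use the normalization \eqref{eq:normalized} to extend it to all $u\in\mathbb{Z}$.

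\textbf{Reduction to large $u$.} I divide \eqref{eq:rec-u} by $P(x,m+u)$. By Lemma~\ref{lem:ratio}, the three terms carry the prefactors $P(x,m+u)$, $P(x,m+u)$ and $P(x,m+u+1)$, and $P(x,m+u+1)/P(x,m+u)=x^2-(m+u+1)^2$. After dividing, each term becomes a finite sum of the form \eqref{eq:normalized}: the product lengths are fixed, the coefficients are rational in $u$, and the scalar denominators $\binom{\cdot+\frac12}{m+1}$ never vanish at integers. Clearing these denominators turns the identity into a polynomial identity in $u,x,y$. A polynomial in $u$ that vanishes for all large integers vanishes identically, so the identity then holds for every integer $u$, including negative ones.

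\textbf{The identity for large $u$.} Here I compare coefficients in $x$ and $y$. With $\nu=1$, write $h_{1,m,u}=\sum_{s=0}^m c_s\,P(x,m+u+s)\,\dfrac{P(y,u+m)}{P(y,u+s)}$. The two families $P(x,m+u+s)$ and $P(y,u+m)/P(y,u+s)=\prod_{q=u+s+1}^{u+m}(y^2-q^2)$ are bases adapted to degree in $x^2$ and $y^2$. Multiplication by $x^2$ acts by a shift:
\[
x^2P(x,N)=P(x,N+1)+(N+1)^2P(x,N),
\]
and multiplication by $y^2$ acts similarly on the $y$-products. So I expand the right-hand side of \eqref{eq:rec-u} in the basis $\{P(x,m+u+s')\,P(y,\cdot)/P(y,\cdot)\}$, with $y$-product lengths between $m+1-s$ and $m-s$. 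The term $-2h_{1,m,u+1}$ must be re-indexed: its $y$-factor $\prod_{q=u+s+2}^{u+m+1}$ is rewritten in the $u$-indexed basis using $y^2-(u+m+1)^2$ and the shift relation. Matching against $h_{1,m+1,u-1}=\sum_{s=0}^{m+1}\ldots P(x,m+u+s)\prod_{q=u+s}^{u+m}(y^2-q^2)$ then reduces the claim to a three-term identity among the scalars $(-1)^s\binom{m}{s}/\binom{m+u+s+\frac12}{m+1}$. This is essentially Pascal's rule $\binom{m+1}{s}=\binom{m}{s}+\binom{m}{s-1}$ combined with the ratio relation for $\binom{N+\frac12}{m+1}$ versus $\binom{N+\frac12}{m+2}$, which produces the factor $(2u-1)/(2m+4)$.

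\textbf{Main obstacle.} The hard step is the bookkeeping in this basis expansion. The basis elements for $u$ and $u+1$ have shifted $y$-products, so the cancellation is delicate. If direct matching becomes unwieldy, I would fall back on Kawanoue's corresponding recurrence for $u\ge0$ in \cite{H. Kawanoue-2026} and rely only on the extension argument above to reach all integer $u$.
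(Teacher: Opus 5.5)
Your reduction step and your index check for \eqref{eq:rec-r} are the same as the paper's proof: divide by $P(x,m+u)$, pass to the fixed-length form \eqref{eq:normalized}, and extend from infinitely many integers because the scalar denominators never vanish at integral $u$. For $u\ge1$ the paper simply cites Kawanoue's Proposition~3, which is your fallback. Your primary plan, a direct coefficient comparison, is a genuinely different and self-contained route. It works, but the sketch leaves out one of the two scalar identities it needs and understates the other.

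Write $c^{(m,u)}_s=(-1)^s\binom ms/\binom{m+u+s+\frac12}{m+1}$, $N_t=m+u+t$, $X_N=P(x,N)$ and $Y_a=P(y,u+m)/P(y,a)$. Read binomial coefficients outside $0\le t\le m$ as $0$. After your shifts, the right-hand side is a combination of two kinds of terms:
\begin{itemize}
\item top-degree terms $X_{N_t}Y_{u+t-1}$, which must match the left-hand side;
\item lower-degree terms $X_{N_t}Y_{u+t}$, which do not occur on the left and must cancel.
\end{itemize}
So you need
\[
\frac{2u-1}{2m+4}\,c^{(m+1,u-1)}_t=c^{(m,u)}_{t-1}+c^{(m,u)}_t-2c^{(m,u+1)}_{t-1},\qquad t\,c^{(m,u)}_t=(t-m-1)\,c^{(m,u+1)}_{t-1}.
\]

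The second identity is the lower-degree cancellation, which your sketch omits. It is stated after removing the common factor $2(2u+m+t+1)$. It holds because $c^{(m,u)}_t$ and $c^{(m,u+1)}_{t-1}$ share the denominator $\binom{N_t+\frac12}{m+1}$, and $t\binom mt=(m+1-t)\binom m{t-1}$.

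For the first identity, use
\[
\binom{N+\frac12}{m+2}=\binom{N+\frac12}{m+1}\frac{N-m-\frac12}{m+2},\qquad \binom{N-\frac12}{m+1}=\binom{N+\frac12}{m+1}\frac{N-m-\frac12}{N+\frac12}.
\]
Multiplying by $(-1)^t\binom{N_t+\frac12}{m+1}(u+t-\frac12)$ turns it into
\[
\left(u-\tfrac12\right)\binom{m+1}t=\left(u+t-\tfrac12\right)\binom mt+\left(u+t-m-\tfrac32\right)\binom m{t-1}.
\]
This follows from Pascal's rule together with the same absorption identity, not from Pascal's rule alone.

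Your route also gives more than the paper's. The shift relations $x^2P(x,N)=P(x,N+1)+(N+1)^2P(x,N)$ and their $y$-analogue hold for every integer index by Lemma~\ref{lem:ratio}. You only need the coefficients to agree, not linear independence of the products $X_NY_a$. Hence the computation proves \eqref{eq:rec-u} for all $u\in\mathbb{Z}$ directly, with no extension argument and no appeal to Kawanoue. The paper's proof is shorter but rests on Kawanoue's recurrence plus the rational-in-$u$ extension. Yours is elementary and self-contained once the two identities above are written out.
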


\vspace{1mm}

\begin{proof}
	According to Proposition 3 in~\cite{H. Kawanoue-2026}, we can obtain the identity \eqref{eq:rec-u} for $u\geq1$. Divide it by $P(x,m+u)$, we have
	\begin{equation*}
		\begin{split}
			&\frac{2u-1}{2m+4}\frac{h_{1,m+1,u-1}}{P(x,m+u)}\\
			=&\left[x^2+y^2-(m+u+1)^2-u^2\right]\frac{h_{1,m,u}}{P(x,m+u)}
			-\frac{2h_{1,m,u+1}}{P(x,m+u+1)}\cdot\frac{P(x,m+u+1)}{P(x,m+u)}\\
			=&\left[x^2+y^2-(m+u+1)^2-u^2\right]\frac{h_{1,m,u}}{P(x,m+u)}
			-\frac{2h_{1,m,u+1}}{P(x,m+u+1)}\cdot\left[x^2-(m+u+1)^2\right].
		\end{split}	
	\end{equation*}
	Thus the normalized identity is rational in $u$ and holds at infinitely many integers; it holds identically. Specializing to any integer $u$ is legitimate. Substitute the old indices $m-1$ and $u=r-m+1$ we can get the identity~\eqref{eq:rec-r}.
\end{proof}

\vspace{1mm}

\begin{lem}\label{lem:delete}
	For integral $u$, $h_{\nu,m,u}$ is symmetric in its auxiliary variables. Consequently, for $r\geq-1$ and any $j$, we can obtain
	\begin{equation}\label{eq:delete}
		\begin{split}
			H_{\nu,m,r}(x;\mathbf y)
			=\sum_{k=0}^{m}(-1)^k\binom mk
			\prod_{q=r-m+k+1}^{r}(y_j^2-q^2)
			\cdot H_{\nu-1,m,r+k}(x;\mathbf y\setminus y_j).
		\end{split}
	\end{equation}
\end{lem}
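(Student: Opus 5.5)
The plan is to prove the deletion formula first for $j=1$, where it is just the definition \eqref{eq:h} with the outermost summation index peeled off. The general $j$ then follows from symmetry in the auxiliary variables, and I would reduce that symmetry to a two-variable identity that is polynomial in $u$.

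\textbf{Step 1 (peeling off $y_1$).} In \eqref{eq:h} set $s_1=k$ and write $s_{j}=s'_{j-1}$ for $j\ge2$, so that $S_j=k+S'_{j-1}$. By Lemma~\ref{lem:ratio}, the $y_1$-factor is
\begin{equation*}
\frac{P(y_1,u+m)}{P(y_1,u+k)}=\prod_{q=u+k+1}^{u+m}(y_1^2-q^2).
\end{equation*}
It depends only on $k$. Every remaining factor has the form $P(x,m+(u+k)+S'_{\nu-1})$, the matching binomial denominator, and $P(y_{j},(u+k)+m+S'_{j-2})/P(y_{j},(u+k)+S'_{j-1})$. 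These are exactly the summands of $h_{\nu-1,m,u+k}(x;y_2,\dots,y_\nu)$. This gives
\begin{equation*}
h_{\nu,m,u}(x;\mathbf y)=\sum_{k=0}^m(-1)^k\binom mk\prod_{q=u+k+1}^{u+m}(y_1^2-q^2)\,h_{\nu-1,m,u+k}(x;y_2,\dots,y_\nu).
\end{equation*}
Putting $u=r-m$ and using \eqref{Hh} yields \eqref{eq:delete} for $j=1$. The identity is purely formal, so it holds for every integer $u$.

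\textbf{Step 2 (reduction of symmetry).} It suffices to check invariance under the adjacent transposition $y_j\leftrightarrow y_{j+1}$. Apply Step 1 repeatedly $j-1$ times. This writes $h_{\nu,m,u}$ as a combination, with coefficients independent of $y_j,y_{j+1}$, of functions $h_{\nu-j+1,m,u'}(x;y_j,\dots,y_\nu)$. So we may assume $j=1$. Peel twice more and group by $n=s_1+s_2$:
\begin{equation*}
h_{\nu,m,u}=\sum_{n=0}^{2m}(-1)^n\, c_{n,u}(y_1,y_2)\,h_{\nu-2,m,u+n}(x;y_3,\dots),
\end{equation*}
where
\begin{equation*}
c_{n,u}=\sum_{s_1+s_2=n}\binom m{s_1}\binom m{s_2}\prod_{q=u+s_1+1}^{u+m}(y_1^2-q^2)\prod_{q=u+n+1}^{u+m+s_1}(y_2^2-q^2).
\end{equation*}
Symmetry therefore reduces to showing that each $c_{n,u}$ is symmetric in $y_1,y_2$.

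\textbf{Step 3 (symmetry of $c_{n,u}$).} The products in $c_{n,u}$ have fixed lengths $m-s_1$ and $m-s_2$, so $c_{n,u}$ is a polynomial in $(u,y_1,y_2)$. Hence $c_{n,u}(y_1,y_2)-c_{n,u}(y_2,y_1)$ is a polynomial in $u$, and it vanishes identically once it vanishes for infinitely many integers $u$. This is the same extension mechanism as in Lemma~\ref{lem:rec}, and here it is even simpler because no denominators occur. Kawanoue proves that $h_{\nu,m,u}$ is symmetric for $u\ge0$. For $\nu=2$, the functions $P(x,m+u+n)$, $n=0,\dots,2m$, have distinct degrees and are therefore linearly independent. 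So the symmetry of $h_{2,m,u}$ for $u\ge0$ forces the symmetry of every $c_{n,u}$ for $u\ge0$, hence for all $u$.

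\textbf{Main obstacle.} The delicate point is whether Kawanoue's symmetry statement really covers all $u\ge0$ (or at least infinitely many $u$) in the needed form. If it does not, I would prove the symmetry of $c_{n,u}$ directly, by induction on $m$. In that argument I would use Pascal's rule $\binom m{s}=\binom{m-1}{s}+\binom{m-1}{s-1}$ and split off the extremal factors $(y_i^2-(u+m)^2)$ to obtain a recurrence relating $c^{(m)}_{n,u}$ to $c^{(m-1)}_{n,u}$ and $c^{(m-1)}_{n-1,u+1}$, with symmetric coefficients. A fallback is to verify the symmetry at the $2(m+1)$ points $y_1^2=(u+a)^2$, which reduces it to a Vandermonde-type binomial identity. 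Once symmetry holds, moving $y_j$ to the first position and applying Step 1 gives \eqref{eq:delete} for arbitrary $j$.
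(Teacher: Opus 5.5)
Your proof is correct, but it handles the extension of symmetry to all integers $u$ differently from the paper.

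The paper's route has three steps. It takes Kawanoue's Proposition 7 for the full $\nu$-variable sum at positive integral $u$. It divides by $P(x,m+u)$ to obtain the normalized form \eqref{eq:normalized}, which is rational in $u$ with fixed product lengths and scalar denominators that do not vanish at integers. It then extends the whole symmetry identity at once. Only after that does it peel off $s_1=k$ for the case $j=1$, and it invokes symmetry for general $j$.

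You reverse the order. You peel first and reduce symmetry to adjacent transpositions. You then reduce further, by peeling twice and grouping by $n=s_1+s_2$, to symmetry of the two-variable coefficients $c_{n,u}(y_1,y_2)$. These are honest polynomials in $(u,y_1,y_2)$, so the extension in $u$ needs no normalization and no control of denominators. You also need Kawanoue's symmetry only for $\nu=2$. The $c_{n,u}$ are separated by the distinct $x$-degrees $2(m+u+n)+1$ of $P(x,m+u+n)$ when $u\ge0$, and that argument is sound.

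Comparing the two routes:
\begin{itemize}
\item The paper's argument is a single short step, but it leans on the full multivariable input and on the normalized rational expression.
\item Your argument uses a weaker input. It isolates the real content in a finite family of two-variable polynomial identities that are independent of $\nu$. It also sidesteps the issue that $P(x,m+u+S_\nu)$ has a $u$-dependent number of factors.
\end{itemize}
This is essentially the mechanism the paper's Remark after the lemma alludes to with Kawanoue's $\Phi_{m,k,v}$.

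Your stated ``main obstacle'' is not actually an obstacle. The symmetry is available for all positive integral $u$, which is infinitely many values, exactly as the paper cites. So the Pascal-rule induction and the point-evaluation fallback are unnecessary. As sketched, they would not have been complete arguments anyway.
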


\vspace{1mm}

\begin{proof}
	For $m=0$ the sum is independent of the auxiliary variables. For $m>0$ and positive integral $u$, the symmetry follows from Proposition 7 in~\cite{H. Kawanoue-2026}. Divide the symmetry identity by $P(x,m+u)$, and apply \eqref{eq:normalized}, the difference is a rational function of $u$ vanishing at infinitely many integers. Hence symmetry holds for every integer $u$. By grouping the terms in~\eqref{eq:h} according to $s_1=k$ and Lemma~\ref{lem:ratio}, we can verify the identity~\eqref{eq:delete} for $j=1$. With symmetry in auxiliary variables, the formula~\eqref{eq:delete} holds for every $j$.
\end{proof}

\vspace{1mm}

\begin{rema}
	This extension does not divide by the coefficient $(k+1)(2v+k+m+2)$ in Kawanoue's auxiliary recurrence. That coefficient can vanish at negative integers, for example $m=2,k=0,v=-2$. Thus the original induction cannot be applied unchanged, it would not be sufficient. Alternatively, each nonzero summand of Kawanoue's $\Phi_{m,k,v}$ is polynomial in $v$ by Lemma~\ref{lem:ratio}. Its symmetry for $v\geq0$ extends as a polynomial identity.
\end{rema}

\vspace{3mm}

\subsection{Evaluation of Finite Sums on the Boundary}

\vspace{3mm}

In this part, we evaluate the finite sum $H_{m,r}(x;y)$ for $y=-x-m$ and obtain the corresponding antisymmetry relation, which is used in the latter proofs. For convenience, write $(a)_n^\uparrow=\prod_{j=0}^{n-1}(a+j)$ and $(a)_n^\downarrow=\prod_{j=0}^{n-1}(a-j)$ for $n\geq0$. We first record the finite scalar identity needed for the boundary.
\begin{lem}[Lemma 4,~\cite{H. Kawanoue-2026}]\label{lem:sum}
	For $m\geq0$, the following holds as a rational identity in $u,k$:
	\begin{equation}\label{eq:sum}
		\sum_{s=0}^{m}
		\frac{(-1)^s\binom ms}{\binom{m+u+s+\frac12}{m+1}}
		\binom{k+m+u+s+\frac12}{2m}
		=\frac{(m+1)(k)_m^\downarrow(u+k+\frac32)_m^\uparrow}
		{(u+\frac12)_{2m+1}^\uparrow}.
	\end{equation}
\end{lem}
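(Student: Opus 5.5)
I would prove \eqref{eq:sum} by regarding both sides, for fixed $m\geq0$ and generic $u$, as polynomials in $k$, and comparing their roots and leading coefficients. Write $w=u+\frac12$. Then
\begin{equation*}
\frac{1}{\binom{m+u+s+\frac12}{m+1}}=\frac{(m+1)!}{(w+s)_{m+1}^\uparrow},
\qquad
\binom{k+m+u+s+\frac12}{2m}=\frac{1}{(2m)!}\prod_{t=k-m+1}^{k+m}(w+s+t).
\end{equation*}
So the left side is a polynomial in $k$ of degree at most $2m$, with coefficients rational in $w$. The right side, $(m+1)(k)_m^\downarrow(w+k+1)_m^\uparrow/(w)_{2m+1}^\uparrow$, is a polynomial in $k$ of degree exactly $2m$. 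For generic $u$ its roots $k=0,1,\dots,m-1$ and $k=-w-1-j$ with $0\leq j\leq m-1$ are pairwise distinct. It therefore suffices to check three things: the left side vanishes at these $2m$ values, and the two $k^{2m}$-coefficients agree. The case $m=0$ reads $\frac{1}{\binom{u+\frac12}{1}}=\frac{1}{u+\frac12}$ and is trivial.

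\textbf{Leading coefficient.} The $k^{2m}$-coefficient of the left side is $\frac{(m+1)!}{(2m)!}\sum_s(-1)^s\binom ms\frac{1}{(w+s)_{m+1}^\uparrow}$. With the forward difference $\Delta f(w)=f(w+1)-f(w)$, this sum equals $(-1)^m\Delta^m$ applied to $1/(w)_{m+1}^\uparrow$. The elementary rule $\Delta\bigl[1/(w)_n^\uparrow\bigr]=-n/(w)_{n+1}^\uparrow$, iterated $m$ times, gives
\begin{equation*}
(-1)^m\Delta^m\frac{1}{(w)_{m+1}^\uparrow}=\frac{(m+1)(m+2)\cdots(2m)}{(w)_{2m+1}^\uparrow}.
\end{equation*}
Multiplying by $\frac{(m+1)!}{(2m)!}$ yields $\frac{m+1}{(w)_{2m+1}^\uparrow}$, which is exactly the $k^{2m}$-coefficient of the right side.

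\textbf{Roots $k\in\{0,\dots,m-1\}$.} The summand, viewed as a function of $s$, is $\frac{(m+1)!}{(2m)!}\prod_{t=k-m+1}^{k+m}(w+s+t)\big/\prod_{t=0}^{m}(w+s+t)$. Since $k-m+1\leq0$ and $k+m\geq m$, the index range $[k-m+1,k+m]$ contains $[0,m]$. The denominator therefore cancels, and the summand is a polynomial in $s$ of degree $2m-(m+1)=m-1$. The alternating sum $\sum_s(-1)^s\binom ms(\cdot)$ is, up to sign, an $m$-th difference, so it annihilates polynomials of degree below $m$ and the sum vanishes.

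\textbf{Roots $k=-w-1-j$, $0\leq j\leq m-1$.} Here the numerator factors become $w+s+t=s+t-1-j$ for $t\in[k-m+1,k+m]$, i.e.\ the factors $s-c$ with $c$ running over $[j+1-m,\,j+m]$. This interval contains every $s\in\{0,\dots,m\}$, because $j+1-m\leq0$ and $j+m\geq m$. So each summand vanishes individually, and no cancellation is needed. I expect this step to be the main subtlety: one has to recognise that it is the binomial numerator that kills the terms, rather than a difference-operator argument as in the previous step.

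\textbf{Conclusion.} Two polynomials in $k$ of degree $2m$ with the same leading coefficient and the same $2m$ distinct roots coincide. This holds for generic $u$, and both sides are rational in $u$, so \eqref{eq:sum} holds as a rational identity in $u,k$.
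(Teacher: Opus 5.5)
Your proof is correct. It necessarily takes a different route from the paper, because the paper gives no argument here. It imports the identity verbatim as Lemma~4 of Kawanoue and only remarks that, being a rational identity whose denominators do not vanish at integral $u$, it may also be used for negative integers $u$.

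Your interpolation argument makes the lemma self-contained. With $w=u+\frac12$, you treat both sides as polynomials in $k$ of degree at most $2m$. You match the $k^{2m}$-coefficients using $\Delta\bigl[1/(w)_n^{\uparrow}\bigr]=-n/(w)_{n+1}^{\uparrow}$. You then exhibit the $2m$ roots:
\begin{itemize}
\item the integral ones $0,\dots,m-1$ are annihilated by the $m$-th difference acting on a polynomial of degree $m-1$ in $s$;
\item the shifted ones $-w-1-j$ are annihilated termwise by the binomial numerator.
\end{itemize}
Each step checks out; for instance, at $m=1$ both sides equal $2k(k+w+1)/\bigl(w(w+1)(w+2)\bigr)$.

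What your route buys over the citation is that the identity is proved directly in $\mathbb{C}(w)[k]$. That is exactly the form the paper needs when it later specializes to negative integral $u$ and substitutes $k=-x-\frac12$ in Proposition~\ref{prop:boundary}. You can also drop the word ``generic'', since $0,\dots,m-1,-w-1,\dots,-w-m$ are already pairwise distinct elements of the field $\mathbb{C}(w)$. The citation buys only brevity.
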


\vspace{1mm}

In this paper, we can also use the lemma for negative integer $u$. These values are allowed because the formula is a rational identity whose denominators do not vanish at integer $u$.

\vspace{1mm}
\begin{prop}\label{prop:boundary}
	Let $r\geq0,m\geq0$, or let $r=-1,m\geq1$. Then
	\begin{equation}\label{eq:boundary}
		H_{m,r}(x;-x-m)
		=A_{m,r}\left(x-r\right)_{m+2r+1}^{\uparrow}\left(x+\frac12\right)_m^{\uparrow},
		\qquad A_{m,r}=\frac{(m+1)(2m)!}{\left(r-m+\frac12\right)_{2m+1}^{\uparrow}}.
	\end{equation}
	In particular the restricted rational function is a polynomial and
	\begin{equation}\label{eq:boundary-sum}
		H_{m,r}(x;-x-m)+H_{m,r}(-x-m;x)=0.
	\end{equation}
\end{prop}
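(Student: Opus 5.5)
The plan is to substitute $y=-x-m$ directly into the one-variable finite sum, factor out the claimed polynomial, and recognise the remaining scalar sum as an instance of Lemma~\ref{lem:sum}.

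\emph{Writing out the summand.} By \eqref{Hh} and \eqref{eq:h} with $\nu=1$ and $u=r-m$,
\begin{equation*}
H_{m,r}(x;y)=\sum_{s=0}^{m}\frac{(-1)^s\binom ms\,P(x,r+s)}{\binom{r+s+\frac12}{m+1}}\cdot\frac{P(y,r)}{P(y,r-m+s)}.
\end{equation*}
By Lemma~\ref{lem:ratio}, the last factor equals $\prod_{q=r-m+s+1}^{r}(y^2-q^2)$. This holds for all integers involved, so the case $r=-1$, where $P(x,-1)=1/x$ can occur, is covered as well.

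\emph{Substituting $y=-x-m$.} Each factor becomes $(x+m-q)(x+m+q)$. The factors $(x+m+q)$ run over $x+r+s+1,\dots,x+r+m$. Multiplied by $P(x,r+s)=(x-r-s)\cdots(x+r+s)$, they give $(x-r-s)\cdots(x+r+m)$. The factors $(x+m-q)$ run over $x+m-r,\dots,x+2m-r-s-1$. In Gamma-quotient form, which is legitimate as a rational identity in $x$, the $s$-th summand then carries
\begin{equation*}
\frac{\Gamma(x+r+m+1)}{\Gamma(x-r-s)}\cdot\frac{\Gamma(x+2m-r-s)}{\Gamma(x+m-r)}
=\frac{(x-r)^{\uparrow}_{m+2r+1}}{(x-r)^{\uparrow}_{m}}\,(x-r-s)^{\uparrow}_{2m}.
\end{equation*}
Hence the common factor $(x-r)^{\uparrow}_{m+2r+1}/(x-r)^{\uparrow}_m$ comes out of the sum. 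What remains is $\sum_s(-1)^s\binom ms\,(x-r-s)^{\uparrow}_{2m}/\binom{r+s+\frac12}{m+1}$.

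\emph{Applying Lemma~\ref{lem:sum}.} Observe that $(x-r-s)^{\uparrow}_{2m}=(2m)!\binom{-x+r+s}{2m}$, since the sign is $(-1)^{2m}=1$. Take $u=r-m$ and $k=-x-\frac12$, so that $k+m+u+s+\frac12=-x+r+s$. Lemma~\ref{lem:sum} applies as a rational identity, which is why negative $u$ is allowed. It gives
\begin{equation*}
(2m)!\,\frac{(m+1)(-x-\tfrac12)^{\downarrow}_m\,(r-m-x+1)^{\uparrow}_m}{(r-m+\tfrac12)^{\uparrow}_{2m+1}}.
\end{equation*}
Next use $(-x-\frac12)^{\downarrow}_m=(-1)^m(x+\frac12)^{\uparrow}_m$ and $(r-m-x+1)^{\uparrow}_m=(-1)^m(x-r)^{\uparrow}_m$. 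The signs cancel, and $(x-r)^{\uparrow}_m$ cancels the denominator from the previous step. This leaves exactly $A_{m,r}(x-r)^{\uparrow}_{m+2r+1}(x+\frac12)^{\uparrow}_m$, which proves \eqref{eq:boundary}. For $r=-1$ the hypothesis $m\ge1$ guarantees that $(x+1)^{\uparrow}_{m-1}$ is an honest polynomial, so the restriction is a polynomial in every allowed case.

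\emph{The antisymmetry \eqref{eq:boundary-sum}.} Replace $x$ by $-x-m$ in \eqref{eq:boundary}; this is allowed because the second argument becomes $-(-x-m)-m=x$. Then
\begin{equation*}
(-x-m-r)^{\uparrow}_{m+2r+1}=(-1)^{m+2r+1}(x-r)^{\uparrow}_{m+2r+1},\qquad
(-x-m+\tfrac12)^{\uparrow}_m=(-1)^m(x+\tfrac12)^{\uparrow}_m.
\end{equation*}
The total sign is $(-1)^{2m+2r+1}=-1$, so the two values sum to zero.

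\emph{Main obstacle.} The delicate point is the bookkeeping in the factor-collection step: checking that the product ranges match for every $s$, including empty ranges and the $r=-1$ case with $P(x,-1)$. To handle this cleanly, I would carry out the whole computation as an identity of rational functions in $x$ via Gamma quotients, and justify each step by Lemma~\ref{lem:ratio}.
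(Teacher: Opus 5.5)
Your proposal is correct and follows essentially the same route as the paper. You substitute $y=-x-m$ and pull out the common factor $(x-r)^{\uparrow}_{m+2r+1}/(x-r)^{\uparrow}_m$, which is exactly $P(x+m,r)=-P(y,r)$, the quantity the paper divides by. You then evaluate the remaining scalar sum via Lemma~\ref{lem:sum} with $u=r-m$, $k=-x-\frac12$, and read off \eqref{eq:boundary-sum} from the signs under $x\mapsto -x-m$; your Gamma-quotient bookkeeping is valid for all the integer ranges involved, including $r=-1$.
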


\vspace{1mm}

\begin{proof}
	Set $u=r-m$, $x=-k-\frac12$, and $y=-m+k+\frac12$. For $a=m+u+s$, by the definition of $P(t,a)$ and the generalized falling-factorial cancellation, we have
	\begin{equation*}
		\frac{P(-k-\frac12,a)}{P(-m+k+\frac12,a-m)}
		=-\left(k+a+\frac12\right)_{2m}^{\downarrow}.
	\end{equation*}
	From the formula~\eqref{eq:h} and Lemma~\ref{lem:sum}, the following identity can be verified,
	\begin{equation*}
		\frac{h_{1,m,u}(-k-\frac12;-m+k+\frac12)}{P(-m+k+\frac12,u+m)}
		=-\frac{(m+1)(2m)!\left(k\right)_m^{\downarrow}\left(u+k+\frac32\right)_m^{\uparrow}}{\left(u+\frac12\right)_{2m+1}^{\uparrow}}.
	\end{equation*}
	Upon replacing $k$ by $-x-\frac12$ and using the oddness of $P$, we can find
	\begin{equation}\label{eq:pre-boundary}
		h_{1,m,u}(x;-x-m)
		=\frac{(m+1)(2m)!}{\left(u+\frac12\right)_{2m+1}^{\uparrow}}
		P(x+m,u+m)\left(x+\frac12\right)_m^{\uparrow}\left(x-u-m\right)_m^{\uparrow}.
	\end{equation}
	For $r=u+m\geq0$, the consecutive products combine as
	\begin{equation*}
		P(x+m,r)\left(x-r\right)_m^{\uparrow}=\left(x-r\right)_{m+2r+1}^{\uparrow}.
	\end{equation*}
	For $r=-1$ and $m\geq1$, the exceptional cancellation is instead
	\begin{equation*}
		P(x+m,-1)\left(x+1\right)_m^{\uparrow}
		=\frac{\left(x+1\right)_m^{\uparrow}}{x+m}=\left(x+1\right)_{m-1}^{\uparrow}.
	\end{equation*}
	Thus we complete the proof of the identity~\eqref{eq:boundary}. Replace $x$ by $-x-m$, the two rising-factorial factors acquire signs $(-1)^{m+2r+1}$ and $(-1)^m$. Their product is $-1$, which proves \eqref{eq:boundary-sum}.
\end{proof}

\vspace{1mm}

\begin{rema}
	The boundary restriction to $y=-x-m$ is computed in $\mathbb{C}(x)$. The cancellation above proves that its result is polynomial. We do not use a polynomial degree argument on the unreduced rational function. For instance
	\begin{equation*}
		h_{1,1,-2}(x;y)=8x+\frac{8(y^2-1)}{3x},\qquad
		h_{1,1,-2}(x;-x-1)=\frac{32}{3}\left(x+\frac12\right).
	\end{equation*}
\end{rema}

\vspace{3mm}

\subsection{Polynomial Coefficients and Logarithmic Divisibility}

In this part, we first specify the coefficients of \eqref{eq:regular} and \eqref{eq:exceptional}, and prove that the derivations in Theorem~\ref{thm:main} belong to $D(\mathcal{C}_{\ell,m})$.

\begin{lem}\label{lem:degree}
	The coefficients in \eqref{eq:regular} and \eqref{eq:exceptional} are polynomials with degrees at most $d_r,d_*$ respectively.
\end{lem}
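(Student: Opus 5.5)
Polynomiality will come from reading the explicit formulas \eqref{eq:regular} and \eqref{eq:exceptional} directly, and the degree bounds from counting factors summand by summand. Both formulas are finite sums over $\mathbf s\in\{0,\dots,m\}^{\ell-1}$ with nonzero scalar denominators, so it suffices to show that each summand is a polynomial and to bound its degree.

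\emph{Polynomiality.} For \eqref{eq:regular} with $r\ge0$ we have $r+S_{\ell-1}\ge0$, so $P(x_i,r+S_{\ell-1})$ is a genuine product of linear factors by \eqref{eq:P}. The inner products $\prod_{q=r-m+S_j+1}^{r+S_{j-1}}(y_j^2-q^2)$ are finite products of polynomials, empty when the range is empty. The range has length $m-s_j\ge0$, so it is never a formal ``negative'' range. For \eqref{eq:exceptional}, $R_{S_{\ell-1}}$ is a polynomial by definition, the inner products range over $q$ from $-m+S_j$ to $S_{j-1}-1$ (again $m-s_j$ terms), and the prefactor $\prod_{j\ne i}x_j$ is polynomial. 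Hence both coefficients are polynomials. Note that this works directly from the displayed formulas and does not pass through the possibly rational $h_{\nu,m,u}$ for negative $u$.

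\emph{Degree count, regular case.} $P(t,a)$ has degree $2a+1$ for $a\ge0$, so the $x_i$-factor contributes $2(r+S_{\ell-1})+1$. The $j$-th inner product contributes $2(m-s_j)$. Summing,
\[
2r+2S_{\ell-1}+1+\sum_{j=1}^{\ell-1}2(m-s_j)=2m(\ell-1)+2r+1=d_r,
\]
independently of $\mathbf s$. Each summand is therefore homogeneous-top of degree exactly $d_r$ (with lower terms), and the sum has degree at most $d_r$.

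\emph{Degree count, exceptional case.} $R_N$ has degree $2N$ for every $N\ge0$, including $N=0$. The inner products contribute $\sum_j 2(m-s_j)$, and the prefactor contributes $\ell-1$. The total is
\[
2S_{\ell-1}+2m(\ell-1)-2S_{\ell-1}+\ell-1=d_*.
\]

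The only point requiring care, and I expect it to be the main (mild) obstacle, is the bookkeeping of product ranges: one must check that each inner range indeed has exactly $m-s_j$ integers and never becomes a formally inverted range needing the reciprocal convention. This holds because $(r+S_{j-1})-(r-m+S_j+1)+1=m-s_j\ge0$, and similarly $(S_{j-1}-1)-(-m+S_j)+1=m-s_j$. Once this is settled, the degree claim is simply additivity of degrees over the factors, and the bound ``at most'' accounts for possible cancellation of top terms across summands.
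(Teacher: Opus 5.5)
Your proof is correct and follows essentially the same route as the paper. Both read polynomiality directly off the explicit sum--product formulas, which contain no variable denominators. Both also get the bounds $d_r$ and $d_*$ by adding, summand by summand, either $2(r+S_{\ell-1})+1$ or $(\ell-1)+2S_{\ell-1}$ to $2\sum_j(m-s_j)$. Your explicit check that each inner product has exactly $m-s_j$ factors is the same bookkeeping the paper states in one line.
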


\vspace{1mm}

\begin{proof}
	By the definition of $p_{i,r}^{(m)}$, the product involving $y_j$ contains $m-s_j$ quadratic factors. For a regular summand, the degree bound is given by
	\begin{equation*}
		2(r+S_{\ell-1})+1+2\sum_j(m-s_j)=d_r.
	\end{equation*}
	For the exceptional summand of $p_{i,*}^{(m)}$, $\deg R_N=2N$ including $N=0$, and the bound is
	\begin{equation*}
		(\ell-1)+2S_{\ell-1}+2\sum_j(m-s_j)=d_*.
	\end{equation*}
The explicit sum-product expressions for $p_{i,r}^{(m)}$ and $p_{i,*}^{(m)}$ contain no variable denominators. Equivalently, in the $h$-notation, Lemma ~\ref{lem:ratio} shows that each quotient involving an auxiliary variable is polynomial. The only possible pole arises from $P(x_i,-1)=x_i^{-1}$ in the exceptional case and is canceled by the factor $X=x_1\cdots x_\ell$.
\end{proof}

\vspace{1mm}

\begin{prop}\label{prop:two}
For every $m\geq0$ and every $r\geq0$, we have
	\begin{equation*}
		P(x\pm y,m)\mid H_{m,r}(x;y)\pm H_{m,r}(y;x).
	\end{equation*}
Assume $G_m(x,y)=xyH_{m,-1}(x;y)$, then
	\begin{equation*}
		P(x\pm y,m)\mid G_m(x,y)\pm G_m(y,x).
	\end{equation*}
\end{prop}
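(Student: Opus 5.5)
Since $P(x\pm y,m)=\prod_{k=-m}^{m}(x\pm y+k)$ is a product of $2m+1$ distinct linear factors, it suffices to prove that each $x\pm y+k$, $-m\le k\le m$, divides the stated combination. I will first settle one root with Proposition~\ref{prop:boundary} and then propagate to the others by induction on $m$ using Lemma~\ref{lem:rec}.

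\emph{Base cases.} For $m=0$ we have $H_{0,r}(x;y)=P(x,r)/\binom{r+\frac12}{1}$. This does not depend on $y$ and is odd in $x$. Modulo $x-y$, the combination $H_{0,r}(x;y)-H_{0,r}(y;x)$ becomes $H_{0,r}(x;x)-H_{0,r}(x;x)=0$. Modulo $x+y$, the combination $H_{0,r}(x;y)+H_{0,r}(y;x)$ becomes $H_{0,r}(x)+H_{0,r}(-x)=0$ by oddness.

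\emph{Symmetries.} Every $H_{m,r}(x;y)$ is odd in $x$ and even in $y$: oddness in $x$ comes from $P(-t,a)=-P(t,a)$ in Lemma~\ref{lem:ratio}, and $y$ enters only through $y^2$. Put $F_\pm(x,y)=H_{m,r}(x;y)\pm H_{m,r}(y;x)$. Then $F_-(x,-y)=H(x;y)+H(y;x)=F_+(x,y)$, so the statement for $x+y+k$ is equivalent to the statement for $x-y+k$. Also $F_\pm(-x,-y)=-F_\pm(x,y)$, so the root sets are symmetric under $k\mapsto -k$. It therefore suffices to treat the factors $x+y+k$ of $F_+$ with $0\le k\le m$.

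\emph{Extreme factor $k=m$.} Vanishing of $F_+$ on $y=-x-m$ is exactly \eqref{eq:boundary-sum} of Proposition~\ref{prop:boundary}. Since the restriction is a polynomial identity, it gives divisibility by $x+y+m$. Note that $x+y+m$ and $x+y-m$ are different factors, so one still needs the middle factors.

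\emph{Inner factors $|k|<m$, by induction on $m$.} Assume the claim for $m-1$ and all $r$. Rewrite \eqref{eq:rec-r} as
\[
\tfrac{2r-2m+1}{2m+2}H_{m,r}=[x^2+y^2-c_r]H_{m-1,r}-2H_{m-1,r+1},
\]
where the bracket $x^2+y^2-c_r$ is symmetric in $(x,y)$ and even in each variable. Symmetrizing, the right side of the symmetrized identity is divisible by $P(x+y,m-1)$ by the inductive hypothesis, applied at $r$ and $r+1$. The scalar $\frac{2r-2m+1}{2m+2}$ is a nonzero half-integer ratio, so $F_+$ for $m$ is divisible by $\prod_{|k|\le m-1}(x+y+k)$. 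Combined with the factors $x+y\pm m$, which come from the boundary and the $k\mapsto -k$ symmetry, this gives all of $P(x+y,m)$.

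\emph{The $G_m$ case.} Multiplying by $xy$ turns the parity to even in $x$ and odd in $y$. The analogous sign bookkeeping gives $G_m(x,-y)=-G_m(x,y)$, which again exchanges the $\pm$ cases. The recurrence \eqref{eq:rec-r} with $r=-1$ holds for $m\ge1$ and involves $H_{m-1,-1}$ and $H_{m-1,0}$. Multiplying by $xy$ and using the regular result for $H_{m-1,0}$, together with the $xy$-weighted inductive hypothesis for $H_{m-1,-1}$, covers the inner factors. The boundary case $r=-1$ of Proposition~\ref{prop:boundary} gives the factor $x+y+m$.

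\emph{Base case $m=0$ and main obstacle.} The base case $m=0$ is special because Proposition~\ref{prop:boundary} excludes $(r,m)=(-1,0)$. Here $G_0=xy\cdot P(x,-1)/\binom{-1/2}{1}=-2y$, and $G_0(x,y)\pm G_0(y,x)=-2(y\pm x)$, which is visibly divisible by $x\pm y$. The main obstacle is this $r=-1$ bookkeeping. In particular, the recurrence mixes $H_{m-1,0}$ (regular) with $H_{m-1,-1}$, so the multiplier $xy$ must be handled consistently under the symmetrization. The regular statement should therefore be proved first for all $r\ge0$ and then fed into the $G$ induction.
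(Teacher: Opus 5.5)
Your proposal is correct and follows essentially the same route as the paper: induction on $m$ through the recurrence \eqref{eq:rec-r} to get $P(x+y,m-1)$, Proposition~\ref{prop:boundary} for $x+y+m$, the parity $(x,y)\mapsto(-x,-y)$ for $x+y-m$, the substitution $y\mapsto -y$ for the minus sign, and then the same scheme for $G_m$, using the regular result at $r=0$. The one detail the paper states explicitly and you should too is that for $r=-1$ the boundary identity holds only in $\mathbb{C}(x)$. You must multiply it by $x(-x-m)$ to get the polynomial identity $G_m(x,-x-m)+G_m(-x-m,x)=0$, and only that polynomial identity yields divisibility by $x+y+m$.
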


\vspace{1mm}

\begin{proof}
In this proof, we consider the calculation in $\mathbb{C}[x,y]$. We first establish the divisibility assertions for all $r\geq0$, and then use them to prove the exceptional case $r=-1$.
	
\vspace{1mm}
	
\noindent\textbf{Case 1}, when $r\geq0$.
	
\vspace{1mm}
	
We prove this by induction on $m$, simultaneously for all $r\geq0$. Assume $m=0$,
	\begin{equation*}
		\begin{split}
			H_{0,r}(x;y)
			=h_{1,0,r}(x;y)
			=(r+\frac12)^{-1}\cdot P(x,r).
		\end{split}
	\end{equation*}
Then we can get $H_{0,r}(x;y)+H_{0,r}(y;x)=(r+\frac12)^{-1}\left[P(x,r)+P(y,r)\right]$. Because the function $P$ is odd, it is routine to check that $H_{0,r}(x;-x)+H_{0,r}(-x;x)=0$, which implies
	\begin{equation*}
		P(x+y,0)=x+y\mid H_{0,r}(x;y)+H_{0,r}(y;x).
	\end{equation*} 
Next assume the result holds for all smaller values of $m$, and consider the case of $m$. By~\eqref{eq:rec-r}, we have
	\begin{equation*}
		\begin{split}
			&\frac{2r-2m+1}{2m+2}\left[H_{m,r}(x;y)+H_{m,r}(y;x)\right]\\
			=&\left[x^2+y^2-(r+1)^2-(r-m+1)^2\right]\left[H_{m-1,r}(x;y)+H_{m-1,r}(y;x)\right]\\
			&-2\left[H_{m-1,r+1}(x;y)+H_{m-1,r+1}(y;x)\right].
		\end{split}
	\end{equation*}
We need to illustrate $P(x+y,m)$ divides the left-hand side. Since the induction hypothesis applies to both $r$ and $r+1$, the right-hand side is divisible by $P(x+y,m-1)$; therefore, the left-hand side is as well. By the Lemma~\ref{lem:ratio}, $P(x+y,m)=(x+y+m)(x+y-m)P(x+y,m-1)$. Thus it is sufficient to prove the left-hand side is divisible by both $x+y+m$ and $x+y-m$.
	
\vspace{1mm}
	
By proposition~\ref{prop:boundary}, the identity $H_{m,r}(x;y)+H_{m,r}(y;x)=0$ holds when $x+y+m=0$. It follows that $x+y+m$ divides the left-hand side. By~\eqref{eq:h},~\eqref{Hh} and the oddness of $P$, we have
\begin{equation}\label{Hoddeven}
	H_{m,r}(-x;y)=-H_{m,r}(x;y) \text{~and~} H_{m,r}(x;y)=H_{m,r}(x;-y).
\end{equation}
Therefore, 
\begin{equation*}
	H_{m,r}(-x;-y)+H_{m,r}(-y;-x)=
	-\left[H_{m,r}(x;y)+H_{m,r}(y;x)\right].
\end{equation*}
Consequently, if $x+y-m=0$ and by proposition~\ref{prop:boundary},
\begin{equation}\label{Hx+y-m}
	\begin{split}
		H_{m,r}(x;y)+H_{m,r}(y;x)
		=&H_{m,r}(x;m-x)+H_{m,r}(m-x;x)\\
		=&-\left[H_{m,r}(-x;x-m)+H_{m,r}(x-m;-x)\right]
		=0.
	\end{split}	
\end{equation}
This means that $x+y-m$ also divides the left-hand side and $P(x+y,m)\mid H_{m,r}(x;y)+H_{m,r}(y;x)$. Finally, replacing $y$ by $-y$ and by~\eqref{Hoddeven} , we can get $P(x-y,m)$ divides $H_{m,r}(x;y)-H_{m,r}(y;x)$, this proves both regular assertions for all $m$, $r\geq0$,
	\begin{equation*}
		P(x\pm y,m)\mid H_{m,r}(x;y)\pm H_{m,r}(y;x).
	\end{equation*}
	
\vspace{1mm}
	
\noindent\textbf{Case 2, when $r=-1$}.
	
\vspace{1mm}
	
We can use the same method to prove it. For the exceptional case  $r=-1$, we also induct on $m$. For convenience, put $T_m(x,y)=G_m(x,y)+G_m(y,x)$. When $m=0$, it is not difficult to find that $G_0(x,y)=xyH_{0,-1}(x;y)=-2y$. Thus $T_0(x,y)=G_0(x,y)+G_0(y,x)=-2(x+y)$ and hence $P(x+y,0)$ divides the sum. 

\vspace{1mm}

Assume $m\geq1$ and $P(x+y,m-1)\mid T_{m-1}(x,y)$. Taking $r=-1$ in \eqref{eq:rec-r} and multiplying by $xy$ gives the polynomial identity
\begin{equation*}
	-\frac{2m+1}{2m+2}G_m(x,y)
	=(x^2+y^2-m^2)G_{m-1}(x,y)-2xyH_{m-1,0}(x;y).
\end{equation*}
Adding its transpose yields
\begin{equation}\label{eq:exceptional-sum-rec}
	-\frac{2m+1}{2m+2}T_m(x,y)
	=(x^2+y^2-m^2)T_{m-1}(x,y)-2xy\left[H_{m-1,0}(x;y)+H_{m-1,0}(y;x)\right].
\end{equation}
The first term on the right is divisible by $P(x+y,m-1)$ by exceptional induction, and the second by the regular assertion already proved. The left scalar is nonzero, so $P(x+y,m-1)\mid T_m(x,y)$.

\vspace{1mm}

For $m\geq1$, Proposition~\ref{prop:boundary} also applies at $r=-1$ and gives an identity in $\mathbb{C}(x)$:
\begin{equation*}
	H_{m,-1}(x;-x-m)+H_{m,-1}(-x-m;x)=0.
\end{equation*}
Multiplication by $x(-x-m)$ gives $T_m(x,-x-m)=0$. Because $T_m$ is a polynomial, this is a polynomial identity in $x$; it therefore holds also at points where the original rational expressions were undefined. Hence $x+y+m\mid T_m(x,y)$. Moreover, according to~\eqref{Hoddeven}, we can get
\begin{equation*}
	G_m(-x,-y)=-G_m(x,y),\qquad T_m(-x,-y)=-T_m(x,y).
\end{equation*}
The same boundary substitution as in~\eqref{Hx+y-m} gives $x+y-m\mid T_m(x,y)$. Unique factorization now implies $P(x+y,m)\mid T_m(x,y)$ and completes exceptional induction.

\vspace{1mm}

The parities of $G_m(x,y)$ differ from those of $H_{m,r}(x;y)$ because of the prefactor $xy$:
\begin{equation*}
	G_m(x,-y)=-G_m(x,y),\qquad G_m(-y,x)=G_m(y,x).
\end{equation*}
Thus replacing $y$ by $-y$, it routine to check that
\begin{equation*}
	P(x-y,m)\mid -G_m(x,y)+G_m(y,x).
\end{equation*}
Multiplication by the unit $-1$ proves the required minus assertion. Therefore, we complete the proof of the following divisibility relation
\begin{equation*}
	P(x\pm y,m)\mid G_m(x,y)\pm G_m(y,x).
\end{equation*}
This completes the proof of the entire proposition.
\end{proof}

\vspace{1mm}

\begin{prop}\label{prop:many}
In any number $\ell\geq2$ of variables, define the regular coefficients by \eqref{eq:regular}, allowing \emph{every} $r\geq0$. Then for $b<c$, we have
	\begin{equation*}
		P(x_b\pm x_c,m)\mid p_{b,r}^{(m)}\pm p_{c,r}^{(m)}.
	\end{equation*}
Moreover, the exceptional coefficients also satisfy the analogous assertion
\begin{equation*}
	P(x_b\pm x_c,m)\mid p_{b,*}^{(m)}\pm p_{c,*}^{(m)}.
\end{equation*}
\end{prop}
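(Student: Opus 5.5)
The plan is to reduce the $\ell$-variable statement to the two-variable Proposition~\ref{prop:two} by repeatedly deleting the auxiliary variables other than $x_b,x_c$, using Lemma~\ref{lem:delete}. Fix $b<c$ and let $\mathbf z$ denote the remaining $\ell-2$ variables $x_j$, $j\ne b,c$. By the identification after \eqref{Hh}, $p_{b,r}^{(m)}=H_{\ell-1,m,r}(x_b;x_c,\mathbf z)$ and $p_{c,r}^{(m)}=H_{\ell-1,m,r}(x_c;x_b,\mathbf z)$. Here I use the symmetry in auxiliary variables from Lemma~\ref{lem:delete}, so the order of $\mathbf y$ does not matter.

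The first step is to apply \eqref{eq:delete} once for each variable $z_j\in\mathbf z$. Since the shift in $r$ only increases, each application stays within $r\geq0$, and the formula is valid. After $\ell-2$ deletions we obtain an expansion
\begin{equation*}
H_{\ell-1,m,r}(x;y,\mathbf z)=\sum_{\mathbf k\in\{0,\dots,m\}^{\ell-2}} c_{\mathbf k}(\mathbf z)\,H_{m,r+K}(x;y),
\end{equation*}
where $K=k_1+\cdots+k_{\ell-2}$. The crucial point is that the coefficients $c_{\mathbf k}(\mathbf z)$ are polynomials in $\mathbf z$ only. They are products of $(-1)^{k_j}\binom m{k_j}$ and factors $\prod(z_j^2-q^2)$, with ranges depending on $r$ and the partial sums of $\mathbf k$, and they do not depend on $x,y$. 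Hence the same expansion holds with $x$ and $y$ swapped, with identical coefficients. Taking the sum or difference gives
\begin{equation*}
p_{b,r}^{(m)}\pm p_{c,r}^{(m)}=\sum_{\mathbf k}c_{\mathbf k}(\mathbf z)\bigl[H_{m,r+K}(x_b;x_c)\pm H_{m,r+K}(x_c;x_b)\bigr].
\end{equation*}
Each bracket is divisible by $P(x_b\pm x_c,m)$ in $\mathbb C[x_b,x_c]$ by Proposition~\ref{prop:two}, since $r+K\geq0$. This proves the regular case.

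For the exceptional case, write $p_{b,*}^{(m)}=X\,H_{\ell-1,m,-1}(x_b;x_c,\mathbf z)$ and expand in the same way with $r=-1$. The terms with $K\ge1$ involve $H_{m,K-1}$ with $K-1\geq0$. Multiplying them by $X=x_bx_c\prod z_j$ keeps them polynomial, and they are handled by the regular divisibility, because the factor $x_bx_c\prod z_j$ is symmetric under swapping $x_b,x_c$. The single term $K=0$ has coefficient $c_{\mathbf 0}(\mathbf z)=\prod_j\prod_{q=-m+1}^{-1}(z_j^2-q^2)$. Multiplied by $X$, it contributes $\bigl(\prod z_j\bigr)c_{\mathbf 0}(\mathbf z)\,G_m(x_b,x_c)$, and Proposition~\ref{prop:two} gives divisibility of $G_m(x_b,x_c)\pm G_m(x_c,x_b)$.

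The main obstacle is legitimacy in the exceptional case. The individual $H_{m,-1}$ and the deletion identity at $r=-1$ are rational, not polynomial. I would check that \eqref{eq:delete} holds as an identity in $\mathbb C(x,\mathbf y)$; Lemma~\ref{lem:delete} is stated for $r\geq-1$, so this is available. After multiplying by $X$, every summand becomes a polynomial: the only pole is $x_b^{-1}$ from $P(x_b,-1)$, which appears only in the $K=0$ term, where it is absorbed into $G_m$. Divisibility can then be asserted in the polynomial ring $\mathbb C[x_b,x_c,\mathbf z]$, using that $P(x_b\pm x_c,m)$ involves only $x_b,x_c$.
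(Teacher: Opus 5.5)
Your argument is correct and is essentially the paper's proof with the induction on $\ell$ unrolled: the paper deletes one auxiliary variable $x_d$ per inductive step via \eqref{eq:delete} and splits the exceptional case into the $k=0$ term (lower exceptional coefficient) and the $k\geq1$ terms (regular coefficients), exactly as you split $K=0$ from $K\geq1$. One harmless slip: with $r=-1$ and all $k_j=0$ the multiplier is $c_{\mathbf 0}(\mathbf z)=\prod_j\prod_{q=-m}^{-1}(z_j^2-q^2)$, with $m$ factors per variable (matching the paper's $A_{-1,0}$ and the degree count $d_*$), not $\prod_{q=-m+1}^{-1}$; the argument is unaffected, since you only use that it does not depend on $x_b,x_c$.
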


\vspace{1mm}

\begin{proof}
To distinguish the dimensions, write
	\begin{equation*}
		\begin{split}
			p_{i,r}^{(m,\ell)}&=H_{\ell-1,m,r}\bigl(x_i;(x_j)_{j\ne i}\bigr)\quad(r\ge0),\\
			p_{i,*}^{(m,\ell)}&=X_\ell H_{\ell-1,m,-1}\bigl(x_i;(x_j)_{j\ne i}\bigr),
			\qquad X_\ell=\prod_{j=1}^{\ell}x_j.
		\end{split}
	\end{equation*}
All auxiliary lists are initially ordered by increasing index. These coefficients are polynomials by the same finite-product argument as in Lemma~\ref{lem:degree}, which applies to every $\ell\geq2$ and every regular parameter $r\geq0$.

\vspace{1mm}
	
We prove the regular and exceptional assertions simultaneously by induction on the number $\ell$ of variables. For $\ell=2$, the regular coefficients are $H_{m,r}(x_1;x_2)$ and $H_{m,r}(x_2;x_1)$, while the exceptional coefficients are $G_m(x_1,x_2)$ and $G_m(x_2,x_1)$. In this case, the required divisibility relations follow from Proposition~\ref{prop:two}.

\vspace{1mm}
	
Let $\ell\geq3$ and assume the assertion for $\ell-1$ variables, simultaneously for all regular parameters $t\ge0$ and the exceptional family. Fix $b<c$ and $\varepsilon\in\{1,-1\}$, and choose $d\notin\{b,c\}$. Set
\begin{equation*}
	R_{\widehat d}=\mathbb{C}[x_j:j\ne d],\qquad
	X_{\widehat d}=\prod_{j\ne d}x_j,\qquad
	L=P(x_b+\varepsilon x_c,m).
\end{equation*}
Then $L\in R_{\widehat d}$, $\mathbb{C}[x_1,\ldots,x_\ell]=R_{\widehat d}[x_d]$, and $X_\ell=x_dX_{\widehat d}$. For $i\ne d$, define
\begin{equation*}
	q_{i,t}=H_{\ell-2,m,t}\bigl(x_i;(x_j)_{j\ne i,d}\bigr)\quad(t\ge0),
	\qquad
	q_{i,*}=X_{\widehat d}H_{\ell-2,m,-1}\bigl(x_i;(x_j)_{j\ne i,d}\bigr).
\end{equation*}
These are precisely the ordinary and exceptional coefficients for the remaining $\ell-1$ variables. Relabeling those variables if necessary, by the induction hypothesis, there exist some polynomials $U_t,U_*\in R_{\widehat d}$ such that
\begin{equation}\label{eq:lower-quotients}
	q_{b,t}+\varepsilon q_{c,t}=LU_t\quad(t\ge0),\qquad
	q_{b,*}+\varepsilon q_{c,*}=LU_*.
\end{equation}

\vspace{1mm}

For $r\geq-1$ and $0\leq k\leq m$, we define
\begin{equation}\label{eq:deletion-multiplier}
	A_{r,k}(x_d)=(-1)^k\binom mk
	\prod_{a=r-m+k+1}^{r}(x_d^2-a^2).
\end{equation}
The product has $m-k$ factors, so $A_{r,k}\in\mathbb{C}[x_d]$, including $k=m$, when the product is empty. By the symmetry of the auxiliary variables established in Lemma~\ref{lem:delete}, 
$x_d$ may be moved to the first auxiliary position in either coefficient. Equation~\eqref{eq:delete} consequently yields
\begin{equation}\label{eq:regular-deletion}
	p_{i,r}^{(m,\ell)}=\sum_{k=0}^{m}A_{r,k}(x_d)q_{i,r+k}
	\quad(i=b,c;\ r\ge0).
\end{equation}
The multiplier is identical for $i=b$ and $i=c$: it depends on $m,r,k,x_d$, but not on the principal variable. Thus it follows from this expression that
\begin{equation*}
	\begin{split}
		p_{b,r}^{(m,\ell)}+\varepsilon p_{c,r}^{(m,n)}
		&=\sum_{k=0}^{m}A_{r,k}(x_d)
		\bigl(q_{b,r+k}+\varepsilon q_{c,r+k}\bigr)\\
		&=L\sum_{k=0}^{m}A_{r,k}(x_d)U_{r+k}.
	\end{split}
	\label{eq:regular-factorization}
\end{equation*}
The second equality holds by \eqref{eq:lower-quotients}, which is applicable because $r+k\ge0$. The last sum belongs to $R_{\widehat d}[x_d]$, so the desired regular divisibility follows. In particular, it is essential that the induction hypothesis holds for every nonnegative parameter: $r+k$ need not lie in the range eventually selected for the basis.

\vspace{1mm}

For $i=b,c$ and $r=-1$, multiply the expression~\eqref{eq:delete} by $X_\ell$:
\begin{equation*}
	p_{i,*}^{(m,\ell)}=X_\ell\sum_{k=0}^{m}A_{-1,k}(x_d)
	H_{\ell-2,m,k-1}\bigl(x_i;(x_j)_{j\ne i,d}\bigr),
\end{equation*}
where
\begin{equation*}
	A_{-1,k}(x_d)=(-1)^k\binom mk\prod_{a=-m+k}^{-1}(x_d^2-a^2).
\end{equation*}
For $k=0$, the parameter remains $-1$, and $X_\ell=x_dX_{\widehat d}$ implies
\begin{equation*}
	X_\ell A_{-1,0}(x_d)H_{\ell-2,m,-1}
	=x_dA_{-1,0}(x_d)q_{i,*},
\end{equation*}
where the arguments of $H$ are the same as in the preceding display. This expression is polynomial because $q_{i,*}$ is the lower-dimensional exceptional coefficient. For $k\ge1$, the parameter $k-1$ is nonnegative, and the $H$ term equals the ordinary coefficient $q_{i,k-1}$. We have therefore obtained the polynomial identity
\begin{equation}\label{eq:exceptional-deletion}
	p_{i,*}^{(m,\ell)}=x_dA_{-1,0}(x_d)q_{i,*}
	+X_\ell\sum_{k=1}^{m}A_{-1,k}(x_d)q_{i,k-1}.
\end{equation}
Then we have
\begin{equation*}
	\begin{split}
		p_{b,*}^{(m,\ell)}+\varepsilon p_{c,*}^{(m,\ell)}
		=x_dA_{-1,0}(x_d)(q_{b,*}+\varepsilon q_{c,*})
		+X_\ell\sum_{k=1}^{m}A_{-1,k}(x_d)
		(q_{b,k-1}+\varepsilon q_{c,k-1}).
	\end{split}
\end{equation*}
Applying respectively the exceptional and regular parts of \eqref{eq:lower-quotients}, we obtain
\begin{equation}\label{eq:exceptional-factorization}
	\begin{split}
		p_{b,*}^{(m,\ell)}+\varepsilon p_{c,*}^{(m,\ell)}
		=L\left[x_dA_{-1,0}(x_d)U_*+
		X_\ell\sum_{k=1}^{m}A_{-1,k}(x_d)U_{k-1}\right].
	\end{split}
\end{equation}
The bracketed expression is a polynomial in all $\ell$ variables. This proves the exceptional divisibility.

\vspace{1mm}

If $m=0$, the sums over $1\le k\le m$ in the last two steps are empty and equal zero, while $A_{-1,0}=1$; the argument remains valid. Both signs have been treated by the arbitrary choice of $\varepsilon$, and the pair $b<c$ was arbitrary. The induction on $\ell$ is complete.
\end{proof}

\vspace{1mm}

\begin{coro}\label{cor:log}
The derivations in Theorem~\ref{thm:main} belong to $D(\mathcal{C}_{\ell,m})$.
\end{coro}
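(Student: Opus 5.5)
The plan is to check the defining condition of $D(\mathcal{C}_{\ell,m})$ hyperplane by hyperplane, for each of $\theta_E$, $\theta_r$ ($0\le r\le\ell-2$) and $\theta_*$. For $\theta_E$ there is nothing to do: it is the Euler derivation, and $\theta_E(\alpha)=\alpha$ for every linear form $\alpha$.

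For $\theta_r$ and $\theta_*$ we first note that they are genuine polynomial derivations. By Lemma~\ref{lem:degree}, $p_{i,r}^{(m)}$ and $p_{i,*}^{(m)}$ are polynomials of degree at most $d_r$ and $d_*$. Hence $f_{i,r}=z^{d_r}p_{i,r}^{(m)}(x/z)$ and $f_{i,*}$ are homogeneous polynomials of degrees $d_r$ and $d_*$. Since $\theta_r$ and $\theta_*$ have no $\partial_z$ component, $\theta(z)=0\in zS$, which handles the hyperplane $z=0$.

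Next we treat a hyperplane $x_b\pm x_c-kz=0$ with $b<c$ and $|k|\le m$. Here $\theta_r(x_b\pm x_c-kz)=f_{b,r}\pm f_{c,r}$. Proposition~\ref{prop:many} (and its exceptional part for $\theta_*$) gives
\begin{equation*}
p_{b,r}^{(m)}\pm p_{c,r}^{(m)}=P(x_b\pm x_c,m)\,U
\end{equation*}
for some polynomial $U$. The factor $P(x_b\pm x_c,m)=\prod_{q=-m}^m(x_b\pm x_c+q)$ has degree $2m+1$, so $\deg U\le d_r-2m-1$. Homogenizing, we obtain
\begin{equation*}
f_{b,r}\pm f_{c,r}=z^{d_r}P\!\left(\tfrac{x_b\pm x_c}{z},m\right)U\!\left(\tfrac{x}{z}\right)=\prod_{q=-m}^{m}(x_b\pm x_c+qz)\cdot z^{d_r-2m-1}U\!\left(\tfrac{x}{z}\right),
\end{equation*}
and the last factor is a polynomial. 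The product runs over all $q\in\{-m,\dots,m\}$, so it contains the factor $x_b\pm x_c-kz$, and divisibility follows. The same computation with $d_*$ handles $\theta_*$. This requires $d_*\ge 2m+1$, which holds whenever $\ell\ge2$; and if some lower-degree part vanishes, the homogenized quotient is still polynomial because $\deg U\le d-2m-1$.

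Given Proposition~\ref{prop:many}, the argument is essentially bookkeeping. The only point needing care, and the one I expect to be the main obstacle, is this homogenization step. One must make sure that divisibility of the dehomogenized coefficients by $P(x_b\pm x_c,m)$ transfers to divisibility of the homogeneous coefficients by every individual factor $x_b\pm x_c-kz$. That reduces to the degree bound on $U$, which comes from the degree bound in Lemma~\ref{lem:degree}; the set of shifts $\{-m,\dots,m\}$ is symmetric, so the sign of $k$ plays no role.
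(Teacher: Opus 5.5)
Your proof is correct and follows the paper's argument: the Euler derivation is trivially logarithmic, $\theta_r$ and $\theta_*$ annihilate $z$, and Proposition~\ref{prop:many} combined with homogenization to the degrees $d_r,d_*$ of Lemma~\ref{lem:degree} gives divisibility of $f_{b,a}\pm f_{c,a}$ by every factor $x_b\pm x_c-kz$. Your degree bound $\deg U\le d-2m-1$ makes explicit the homogenization step that the paper states in a single sentence.
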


\vspace{1mm}

\begin{proof}
The Euler derivation is logarithmic for any central arrangement, whereas the remaining derivations annihilate $z$. Combining Proposition~\ref{prop:many} with homogenization to the degrees in Lemma~\ref{lem:degree}, we obtain that $x_b\pm x_c-kz$ divides $f_{b,a}\pm f_{c,a}$. To see this, note that homogenizing a polynomial divisible by $P(x_b\pm x_c,m)$ preserves divisibility by the homogeneous product of $2m+1$ linear factors. This establishes the divisibility for every defining hyperplane.
\end{proof}

\vspace{3mm}

\section{Proof of the Main Theorem}

\vspace{3mm}

This section we proves the main theorem~\ref{thm:main}. For convenience, we first define lexicographic order $x_1>x_2>\cdots>x_\ell$ on the dehomogenized coefficient polynomials. 

\vspace{1mm}

\begin{lem}\label{lem:LT}
	For $1\leq i\leq\ell$ and $r\geq0$, the leading terms in the dehomogenized coefficient polynomials $p_{i,r}^{(m)}$ and $p_{i,*}^{(m)}$ are
	\begin{equation}\label{eq:LTreg}
		\operatorname{LT}(p_{i,r}^{(m)})
		=\frac{(-1)^{m(\ell-i)}}{\binom{r+m(\ell-i)+\frac12}{m+1}}
		\left(\prod_{j<i}x_j^{2m}\right)x_i^{2m(\ell-i)+2r+1},
	\end{equation}
	\begin{equation}\label{eq:LTstar}
		\operatorname{LT}(p_{i,*}^{(m)})
		=\frac{(-1)^{m(\ell-i)}}{\binom{m(\ell-i)-\frac12}{m+1}}
		\left(\prod_{j<i}x_j^{2m+1}\right)x_i^{2m(\ell-i)}
		\left(\prod_{j>i}x_j\right).
	\end{equation}
\end{lem}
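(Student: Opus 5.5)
The plan is to read off the lexicographically largest monomial summand by summand from the explicit finite sums \eqref{eq:regular} and \eqref{eq:exceptional}. Then I will show that exactly one multi-index $\mathbf s$ attains the overall maximum, so no cancellation can occur.

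\textbf{Leading monomial of each summand (regular case).} Fix $i$ and recall $\mathbf y=(x_1,\dots,\widehat{x_i},\dots,x_\ell)$. Thus $y_j=x_j$ for $j<i$ and $y_j=x_{j+1}$ for $j\ge i$. The summand indexed by $\mathbf s$ is a scalar times
\[
P(x_i,r+S_{\ell-1})\prod_{j}\prod_{q}(y_j^2-q^2),
\]
where the inner product over $q$ has exactly $m-s_j$ factors. Since $r+S_{\ell-1}\ge0$, $P(x_i,r+S_{\ell-1})$ is a monic polynomial of degree $2(r+S_{\ell-1})+1$ in $x_i$. Each $y_j$-product is monic of degree $2(m-s_j)$ in $y_j$. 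Hence the summand is a product of monic univariate polynomials in distinct variables. Its lex-leading monomial is therefore
\[
x_i^{2(r+S_{\ell-1})+1}\prod_j y_j^{2(m-s_j)},
\]
with coefficient $(-1)^{S_{\ell-1}}\prod_j\binom m{s_j}\big/\binom{r+S_{\ell-1}+\frac12}{m+1}$, which is nonzero.

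\textbf{Comparing summands.} Every monomial of the summand indexed by $\mathbf s$ is $\le$ its leading monomial, so it suffices to find the lex-maximum among these leading monomials and check that it is attained by a unique $\mathbf s$. I compare exponents in the order $x_1,x_2,\dots$.
\begin{enumerate}
\item For $j<i$, the exponent of $x_j=y_j$ is $2(m-s_j)$. This is maximized only by $s_j=0$, and these variables are compared first, which forces $s_1=\dots=s_{i-1}=0$.
\item The next variable compared is $x_i$, with exponent $2(r+S_{\ell-1})+1$. Given the previous constraints, this is maximized only when $s_j=m$ for all $j\ge i$, so $S_{\ell-1}=m(\ell-i)$.
\end{enumerate}
This singles out one $\mathbf s$. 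For it, the binomial factors are $\binom m0=\binom mm=1$ and the sign is $(-1)^{m(\ell-i)}$. Variables $x_j$ with $j>i$ get exponent $0$. This yields \eqref{eq:LTreg}, and because the maximizer is unique, no other summand can cancel it.

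\textbf{Exceptional case.} I argue the same way using \eqref{eq:exceptional}. The function $R_N(x_i)$ is monic of degree $2N$ in $x_i$, including $N=0$. The $y_j$-product $\prod_{q=-m+S_j}^{S_{j-1}-1}$ again has $m-s_j$ factors, and the prefactor $\prod_{j\ne i}x_j$ is a fixed monomial that shifts all exponents uniformly. The lex comparison again forces $s_j=0$ for $j<i$ and $s_j=m$ for $j\ge i$. This gives exponent $2m+1$ on $x_j$ for $j<i$, exponent $2m(\ell-i)$ on $x_i$, and exponent $1$ on $x_j$ for $j>i$. The coefficient is $(-1)^{m(\ell-i)}/\binom{m(\ell-i)-\frac12}{m+1}$, which gives \eqref{eq:LTstar}.

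\textbf{Main obstacle.} The only delicate point is the uniqueness argument. One must argue carefully that any $\mathbf s$ deviating from the selected one already loses at the first differing variable in the order $x_1>\dots>x_\ell$. This needs the index shift between $y_j$ and $x_j$ for $j\ge i$ to be tracked correctly; in particular, for $j\ge i$ the variable $y_j=x_{j+1}$ is compared only after $x_i$.
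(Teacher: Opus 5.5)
Your proposal is correct and follows essentially the same route as the paper: you read off each summand's lex-leading monomial from the explicit sums, force $s_j=0$ for $j<i$ and then $s_j=m$ for $j\ge i$ by maximizing exponents in the order $x_1,x_2,\dots$, and use uniqueness of the maximizing $\mathbf s$ to rule out cancellation. Your explicit justification that each summand is a product of monic univariate factors in distinct variables, so that its leading monomial is the product of their leading monomials, makes precise a step the paper leaves implicit.
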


\vspace{1mm}

\begin{proof}
We use the notations $\mathbf{y}=(y_1,y_2,\dots,y_{\ell-1})
=(x_1,x_2,\dots,\widehat{x_i},\dots,x_\ell)$, $s_i$ and $S_i$ for $1\leq i\leq\ell$ in the coefficient polynomials. For fixed $i$, the summation index $t_j$ of $x_j$ is defined by
	\begin{equation*}
		t_j=
		\begin{cases}
			s_j,\quad &j<i,\\
			s_{j-1},&j>i,
		\end{cases}
	\end{equation*}
	so $0\leq t_j\leq m$. In terms of $t_j$, the exponent of $x_j$ in a summand of $p_{i,r}^{(m)}$ is
	\begin{equation*}
		\begin{cases}
			2(m-t_j),\quad &j\ne i,\\
			2r+2S_{\ell-1}+1,\quad &j=i.\\
		\end{cases}
	\end{equation*}
	To obtain the leading term under the lexicographic order $x_1>x_2 > \dots > x_\ell$, we first maximize the exponent of $x_1$, then that of $x_2$, and so on. Hence we must take $t_j=0$ for $j<i$, and then maximize the exponent of $x_i$, which forces $t_j=m$ for $j>i$. Therefore there is exactly one summand contributing to the leading term, and its leading monomial is
	\begin{equation*}
		\left(\prod_{j<i}x_j^{2m}\right)
		x_i^{2m(\ell-i)+2r+1}.
	\end{equation*}
	Reading off the corresponding scalar coefficient gives
	\begin{equation*}
		\operatorname{LT}(p_{i,r}^{(m)})
		=\frac{(-1)^{m(\ell-i)}}{\binom{r+m(\ell-i)+\frac12}{m+1}}
		\left(\prod_{j<i}x_j^{2m}\right)
		x_i^{2m(\ell-i)+2r+1}.
	\end{equation*}
	By the same argument, using the definition of $R_{S_{\ell-1}}$ and the extra factor $X$, we obtain
	\begin{equation*}
		\operatorname{LT}(p_{i,*}^{(m)})
		=\frac{(-1)^{m(\ell-i)}}{\binom{m(\ell-i)-\frac12}{m+1}}
		\left(\prod_{j<i}x_j^{2m+1}\right)
		x_i^{2m(\ell-i)}
		\left(\prod_{j>i}x_j\right).
	\end{equation*}
\end{proof}

\vspace{1mm}

Let $B=(p_{i,a}^{(m)})$ have rows $i=1,\dots,\ell$ and columns $a=0,1,\dots,\ell-2,*$. We next establish the following properties of determinant of $B$.

\vspace{1mm}

\begin{prop}\label{prop:det}
	The determinant of $B$ is nonzero. Its leading term is
	\begin{equation}\label{eq:detLT}
		\operatorname{LT}(\det B)=C_{\ell,m}\prod_{i=1}^{\ell}x_i^{2(2m+1)(\ell-i)},
	\end{equation}
	where
	\begin{equation*}
		C_{\ell,m}=
		\frac{\displaystyle(-1)^{m\frac{\ell(\ell-1)}{2}+\frac{(\ell-1)(\ell-2)}{2}}}
		{\displaystyle\binom{-\frac12}{m+1}
			\prod_{n=1}^{\ell-1}\binom{(m+1)n-\frac12}{m+1}}
		\ne0.
	\end{equation*}
\end{prop}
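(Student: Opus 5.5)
The plan is to expand $\det B=\sum_{\sigma}\operatorname{sgn}(\sigma)\prod_{i=1}^{\ell}p_{i,\sigma(i)}^{(m)}$ over bijections $\sigma$ from the rows $\{1,\dots,\ell\}$ to the columns $\{0,1,\dots,\ell-2,*\}$. I would show that exactly one $\sigma$ attains the lexicographically largest monomial among all the products. Since a monomial order is multiplicative, the leading monomial of each product $\prod_i p_{i,\sigma(i)}^{(m)}$ is $\prod_i\operatorname{LM}(p_{i,\sigma(i)}^{(m)})$, which Lemma~\ref{lem:LT} gives explicitly. If the maximum is attained by a single $\sigma$, nothing can cancel it. So $\operatorname{LT}(\det B)$ is $\operatorname{sgn}(\sigma)$ times the product of the corresponding leading terms, and in particular $\det B\neq0$.

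The candidate is $\sigma_0(i)=\ell-1-i$ for $1\le i\le\ell-1$ and $\sigma_0(\ell)=*$. First I would record the shape of the leading monomials from Lemma~\ref{lem:LT}. A regular entry in row $j$ involves only $x_1,\dots,x_j$: it contributes $x_k^{2m}$ for $k<j$ and $x_j^{2m(\ell-j)+2r+1}$. A starred entry in row $j$ contributes $x_k^{2m+1}$ for $k<j$, $x_j^{2m(\ell-j)}$, and $x_k$ for $k>j$.

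Next I would run a greedy comparison, by induction on $i$. Suppose rows $1,\dots,i-1$ are already forced to take $\sigma_0$. The exponent of $x_i$ in the product then splits into three parts:
\begin{itemize}
\item The forced rows $j<i$ contribute nothing, because they are regular and involve only variables up to $x_j$.
\item Each row $j>i$ contributes $2m$, plus one extra unit if it carries the $*$ column.
\item Row $i$ contributes $2m(\ell-i)+2r+1$ if it takes a regular column $r$, or $2m(\ell-i)$ together with $x_k$ for $k>i$ if it takes $*$.
\end{itemize}
The $x_i$-exponent is therefore maximized only when row $i$ takes the largest remaining regular column, $r=\ell-1-i$, and the star sits in some later row. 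Choosing $*$ in row $i$ loses at least one in the exponent, since $2r+1\geq1>0$ and the extra unit from the star is lost as well. This pins down $\sigma_0$ uniquely.

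With $\sigma_0$ fixed, I would add up the exponents. The $x_i$-exponent is
\[
\bigl(2m(\ell-i)+2(\ell-1-i)+1\bigr)+2m(\ell-i-1)+(2m+1)=2(2m+1)(\ell-i),
\]
where the last term comes from the starred row $\ell$. Row $\ell$ contributes $x_\ell^{0}$, so the total is the monomial claimed in \eqref{eq:detLT}.

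For the constant, row $i<\ell$ with $n=\ell-i$ gives the coefficient $(-1)^{mn}/\binom{(m+1)n-\frac12}{m+1}$, and row $\ell$ gives $1/\binom{-\frac12}{m+1}$. The signs combine to $(-1)^{m\ell(\ell-1)/2}$. The permutation $\sigma_0$ reverses the first $\ell-1$ columns and fixes $*$, so $\operatorname{sgn}\sigma_0=(-1)^{(\ell-1)(\ell-2)/2}$. Together these give $C_{\ell,m}$. It is nonzero because every binomial in the denominator is a product of nonzero half-integers divided by a factorial.

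The main obstacle is the uniqueness step: one must make sure that no other $\sigma$ ties at some stage of the lexicographic comparison. The key inequality is the strict gap $2r+1>0$ when comparing a regular column with $*$, together with the fact that regular rows never touch later variables.
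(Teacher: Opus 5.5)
Your proposal is correct and follows essentially the same argument as the paper. Both identify a unique lex-maximal permutation greedily, row by row, by comparing the $x_i$-exponent: $4m(\ell-i)+2r+2$ for a regular column against $4m(\ell-i)$ for $*$. The resulting assignment $(\ell-2,\dots,0,*)$ then goes through the same sign and constant bookkeeping to give $C_{\ell,m}$.
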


\vspace{1mm}

\begin{proof}
	For each permutation term in the expansion of the determinant of $B$, its leading monomial is the product of the leading monomials of its entries. We show that a unique permutation maximizes that product.
	
	Suppose the first $i-1$ rows have been assigned their maximizing columns, and put $n=\ell-i+1$. The remaining regular columns are $r=0,\cdots,n-2$, together with the exceptional column. The already selected entries are regular and their leading monomials contain no variables later than their row index, so they contribute nothing to the exponent of $x_i$.
	
	If row $i$ receives a regular column $r$, the total exponent of $x_i$ among all remaining leading entries is
	\begin{equation*}
		[2m(n-1)+2r+1]+(n-2)2m+(2m+1)
		=4m(n-1)+2r+2.
	\end{equation*}
	If row $i$ receives the exceptional column, that exponent is instead
	\begin{equation*}
		2m(n-1)+(n-1)2m=4m(n-1).
	\end{equation*}
	Thus the unique maximizing choice is the largest remaining regular parameter $r=n-2$, as long as $n\geq2$. The last row receives the exceptional column. The unique maximizing assignment is therefore
	\begin{equation*}
		(\ell-2,\ell-3,\dots,0,*)
	\end{equation*}
	from top row to bottom row. Its exponent of $x_i$ is $2(2m+1)(\ell-i)$.
	
	Reversing the first $\ell-1$ columns has sign $(-1)^{(\ell-1)(\ell-2)/2}$. For row $i<\ell$, set $n'=\ell-i$; its scalar denominator is
	\begin{equation*}
		\binom{(n'-1)+mn'+\frac12}{m+1}
		=\binom{(m+1)n'-\frac12}{m+1},
	\end{equation*}
	and its sign is $(-1)^{mn'}$. The last entry contributes $\binom{-1/2}{m+1}^{-1}$. Multiplying these factors proves \eqref{eq:detLT} and \eqref{eq:constant}. Since the maximizing permutation is unique, no other determinant term cancels it.
\end{proof}

\vspace{1mm}

\begin{proof}[\textbf{Proof of Theorem~\ref{thm:main}}]
It follows Corollary~\ref{cor:log} that $\theta_E,\theta_0,\theta_1,\dots,\theta_{\ell-2},\theta_*\in D(\mathcal{C}_{\ell,m})$. Let
\begin{equation*}
	\Delta=\operatorname{det}[\theta_E,\theta_0,\theta_1,\dots,\theta_{\ell-2},\theta_*]
\end{equation*}
denote the determinant of the full homogeneous coefficient matrix. Then $\Delta=z\det(f_{i,a})$. At $z=1$, its remaining factor specializes to $\det B\ne0$, so $\Delta\ne0$. Let $\alpha_H$ be the defining linear form of the hyperplane $H$ in $\mathcal{C}_{\ell,m}$, and choose linear coordinates with $\alpha_H$ as the first coordinate. Since each column of the coefficient matrix represents a logarithmic derivation, every entry in the first row is divisible by $\alpha_H$. Thus it is not difficult to find that $\alpha_H$ divides the determinant in these coordinates. As a linear change of coordinates multiplies the determinant by a nonzero scalar, it follows that $\alpha_H\mid\Delta$. This holds for every defining linear form. Since these forms are pairwise nonassociate, their product $Q_{\ell,m}$ divides $\Delta$.

\vspace{1mm}

From Lemma~\ref{lem:LT}, we deduce that the upper bound on the degrees of these derivations $\theta_E,\theta_0,\theta_1,\dots,\theta_{\ell-2},\theta_*$, as stated in Lemma~\ref{lem:degree}, is attainable. The degrees satisfy
	\begin{equation*}
		1+\sum_{r=0}^{\ell-2}d_r+d_*
		=1+(2m+1)\ell(\ell-1)=\deg Q_{\ell,m}.
	\end{equation*}
	Hence $\Delta=cQ_{\ell,m}$ for some nonzero scalar $c$. At $z=1$, the leading monomial of $Q_{\ell,m}(x,1)$ has coefficient $1$ and is the monomial in \eqref{eq:detLT}. Thus $c=C_{\ell,m}$. By Saito's determinant criterion \cite{K. Saito-1980}, we can prove that the derivations form a basis.
\end{proof}

\vspace{3mm}

\section{Restriction at infinity and the integral basis}

\vspace{3mm}

Define formal integration of even Laurent monomials by
\begin{equation*}
	\int^{x_i}t^{2a}\,dt=\frac{x_i^{2a+1}}{2a+1}\qquad(a\in\mathbb{Z}).
\end{equation*}
There is no $t^{-1}$ term, so this is well defined. For negative powers it is not a definite integral from $0$. Put
\begin{equation*}
	I_{i,u}^{(m)}=\int^{x_i}t^{2u}\prod_{j=1}^{\ell}(t^2-x_j^2)^m\,dt,
	\qquad \eta_u^{(m)}=\sum_i I_{i,u}^{(m)}\partial_{x_i}.
\end{equation*}
Feigin-Wang-Yoshinaga (Example 3.9(1), \cite{M. Feigin-2025}) prove that
\begin{equation}\label{eq:FWY}
	\eta_{-m}^{(m)},\eta_{-m+1}^{(m)},\cdots,\eta_{-m+\ell-2}^{(m)},
	\quad X\eta_{-m-1}^{(m)}
\end{equation}
form a basis of $D(D_\ell,2m+1)$. The following establishes the exact normalization of our lift.
\begin{prop}\label{prop:top}
	With $c_{\ell,m}=2(m+1)(-1)^{m\ell}$,
	\begin{equation*}
		\theta_r|_{z=0}=c_{\ell,m}\eta_{r-m}^{(m)},\qquad
		\theta_*|_{z=0}=c_{\ell,m}X\eta_{-m-1}^{(m)}.
	\end{equation*}
\end{prop}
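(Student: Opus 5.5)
The plan is to compare top homogeneous components. By definition $f_{i,r}(x,z)=z^{d_r}p_{i,r}^{(m)}(x/z)$, and Lemma~\ref{lem:degree} gives $\deg p_{i,r}^{(m)}\le d_r$. Hence $f_{i,r}(x,0)$ is exactly the degree-$d_r$ homogeneous component of $p_{i,r}^{(m)}$, and this holds even if the lower terms are complicated. The same is true for $f_{i,*}$ with $d_*$. So it suffices to show, for each $i$, that the degree-$d_r$ part of \eqref{eq:regular} equals $c_{\ell,m}I_{i,r-m}^{(m)}$, and that the degree-$d_*$ part of \eqref{eq:exceptional} equals $c_{\ell,m}X I_{i,-m-1}^{(m)}$.

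\textbf{Top part of $p$.} In each summand of \eqref{eq:regular}, replace $P(x_i,r+S_{\ell-1})$ by its top monomial $x_i^{2(r+S_{\ell-1})+1}$. The product over $q$ has $m-s_j$ factors, so replace it by $y_j^{2(m-s_j)}$. This gives
\[
\operatorname{top}(p_{i,r}^{(m)})=\sum_{\mathbf s}\frac{(-1)^{S_{\ell-1}}x_i^{2(r+S_{\ell-1})+1}}{\binom{r+S_{\ell-1}+\frac12}{m+1}}\prod_{j}\binom m{s_j}y_j^{2(m-s_j)} .
\]
For the exceptional case, $R_N$ has top monomial $x_i^{2N}$ (including $N=0$). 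Multiplying by $\prod_{j\ne i}x_j$ gives the analogous expression with $x_i^{2S_{\ell-1}}\prod_{j\ne i}x_j$ and denominator $\binom{S_{\ell-1}-\frac12}{m+1}$.

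\textbf{Expansion of $I$.} Expand $\prod_{j\ne i}(t^2-y_j^2)^m=\sum_{\mathbf s}\prod_j\binom m{s_j}(-1)^{m-s_j}y_j^{2(m-s_j)}t^{2s_j}$. This reduces $I_{i,u}^{(m)}$ to a combination of the one-variable formal integrals
\[
J_a(x)=\int^{x}t^{2a}(t^2-x^2)^m\,dt,\qquad a=u+S_{\ell-1}.
\]
Expanding $(t^2-x^2)^m$ binomially and integrating termwise (all exponents are even, so this is legitimate for negative $a$ as well) gives
\[
J_a(x)=x^{2a+2m+1}\sum_{k}\binom mk\frac{(-1)^{m-k}}{2a+2k+1}.
\]
By the partial-fraction identity $\sum_k(-1)^k\binom mk/(k+b)=m!/(b)_{m+1}^{\uparrow}$ with $b=a+\tfrac12$, this equals
\[
J_a(x)=\frac{(-1)^m}{2(m+1)\binom{a+m+\frac12}{m+1}}\,x^{2a+2m+1}.
\]
The identity is rational in $b$, and the denominators are products of nonzero half-integers, so it holds for every integer $a$.

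\textbf{Matching.} For the regular case take $u=r-m$, so $a+m=r+S_{\ell-1}$. The denominators and the powers of $x_i$ then agree with the top part above. The sign is $(-1)^{m}\prod_j(-1)^{m-s_j}=(-1)^{m\ell}(-1)^{S_{\ell-1}}$. This gives $I^{(m)}_{i,r-m}=\frac{(-1)^{m\ell}}{2(m+1)}\operatorname{top}(p^{(m)}_{i,r})$, i.e.\ $\operatorname{top}(p^{(m)}_{i,r})=c_{\ell,m}I^{(m)}_{i,r-m}$.

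For the exceptional case take $u=-m-1$. Then $J_a$ has power $x_i^{2S_{\ell-1}-1}$ and denominator $\binom{S_{\ell-1}-\frac12}{m+1}$. Multiplying by $X=x_i\prod_{j\ne i}x_j$ reproduces $x_i^{2S_{\ell-1}}\prod_{j\ne i}x_j$ with the same constant $c_{\ell,m}$.

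The main obstacle is bookkeeping: tracking the signs $(-1)^{m-s_j}$ and $(-1)^{m-k}$ so that exactly $(-1)^{m\ell}$ survives. One must also make sure the beta-type identity and the formal integral remain valid for the negative exponents that occur when $r<m$ and in the exceptional case. I would handle the latter by invoking rationality in $a$, in the same way the paper extends identities in \eqref{eq:normalized}.
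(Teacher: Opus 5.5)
Your proposal is correct and follows essentially the same route as the paper. You expand the $y_j$-factors and evaluate $\int^x t^{2a}(t^2-x^2)^m\,dt$ by the partial-fraction identity, which is valid for every integral $a$ since $a+\frac12$ is never a nonpositive integer. You then match the result termwise with the top-degree components of \eqref{eq:regular} and \eqref{eq:exceptional}, and use Lemma~\ref{lem:degree} to identify $f_{i,a}(x,0)$ with those components; your explicit sign bookkeeping and justification of the $z=0$ step spell out what the paper states briefly.
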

\begin{proof}
	For any integral $a$, expansion and partial fractions give
	\begin{equation}\label{eq:formal}
		\int^{x}t^{2a}(t^2-x^2)^m\,dt
		=\frac{(-1)^m x^{2a+2m+1}}
		{2(m+1)\binom{a+m+\frac12}{m+1}}.
	\end{equation}
	Indeed the coefficient on the left is
	\begin{equation*}
		\sum_{b=0}^{m}\frac{(-1)^{m-b}\binom mb}{2a+2b+1}
		=\frac{(-1)^m m!}{2\prod_{b=0}^{m}(a+b+\frac12)}.
	\end{equation*}
	The last equality follows, for example, by comparing the simple-pole residues of the rational functions in $a$ and their values at infinity. All denominators are nonzero at integral $a$.
	
	Expand the $\ell-1$ factors involving $y_j$ in $I_{i,u}^{(m)}$ and apply \eqref{eq:formal} with $a=u+S_{\ell-1}$. This yields
	\begin{equation}\label{eq:I-expansion}
		I_{i,u}^{(m)}=
		\sum_{\mathbf s}
		\frac{(-1)^{m\ell-S_{\ell-1}}x_i^{2(u+m+S_{\ell-1})+1}}
		{2(m+1)\binom{u+m+S_{\ell-1}+\frac12}{m+1}}
		\prod_j\binom m{s_j}y_j^{2(m-s_j)}.
	\end{equation}
	For $u=r-m$ and $r\geq0$, multiplying this by $c_{\ell,m}$ gives precisely the degree-$d_r$ component of \eqref{eq:regular}.
	
	For $u=-m-1$, the smallest power of $x_i$ in \eqref{eq:I-expansion} is $-1$. Multiply by $X$ before taking polynomial homogeneous components. The degree-$d_*$ component of \eqref{eq:exceptional} has $x_i^{2S_{\ell-1}}\prod_{j\ne i}x_j$ in the corresponding term, so it equals $c_{\ell,m}XI_{i,-m-1}^{(m)}$. This argument does not apply a polynomial-only top-degree lemma to a rational function. Finally, setting $z=0$ in the homogenizations retains exactly these top components.
\end{proof}
This identifies the geometric motivation for the parameter choices and gives a second proof of independence using \eqref{eq:FWY}. 

\vspace{3mm}

\section{Examples}

\vspace{3mm}

In this part, we consider the case $\mathcal{C}_{4,1}=c\operatorname{Cat}(D_4,1)$. Its defining polynomial is
\begin{equation*}
	Q_{4,1}=
	z\prod_{1\le i<j\le4}
	(x_i-x_j)\bigl((x_i-x_j)^2-z^2\bigr)
	(x_i+x_j)\bigl((x_i+x_j)^2-z^2\bigr).
\end{equation*}
We give the coefficients of the basis explicitly, retaining the normalization of the general construction. For $i\in\{1,2,3,4\}$, write $t_i=x_i^2,w=z^2$, and set
\begin{equation*}
e_{1,i}=\sum_{j\ne i}x_j^2,\qquad
e_{2,i}=\sum_{\substack{j<k\\j,k\ne i}}x_j^2x_k^2,\qquad
e_{3,i}=\prod_{j\ne i}x_j^2.
\end{equation*}

\vspace{1mm}

The homogenized regular coefficients are
\begin{equation*}
	\begin{split}
f_{i,0}=&-\frac{8x_i}{105}\Bigl[3t_i^3-7wt_i^2+7w^2t_i-3w^3-7(t_i^2-w^2)e_{1,i}+35(t_i-w)e_{2,i}+105e_{3,i}\Bigr],\\[12pt]
f_{i,1}=&-\frac{8x_i(t_i-w)}{315}\Bigl[5t_i^3-19wt_i^2+23w^2t_i-3w^3-(9t_i^2-12wt_i+9w^2)e_{1,i}+21(t_i+w)e_{2,i}-105e_{3,i}\Bigr],\\[12pt]
f_{i,2}=&-\frac{8x_i(t_i-w)(t_i-4w)}{3465}\Bigl[35t_i^3-155wt_i^2+207w^2t_i-39w^3-(55t_i^2-88wt_i+33w^2)e_{1,i}\\
&+(99t_i+33w)e_{2,i}-231e_{3,i}\Bigr],\\[12pt]
f_{i,*}=&-\frac8{15}\prod_{j\ne i}x_j\Bigl[t_i^3+5wt_i^2-21w^2t_i+5w^3-(5t_i^2-20wt_i+5w^2)e_{1,i}-(15t_i-5w)e_{2,i}-5e_{3,i}\Bigr].
	\end{split}
\end{equation*}
These formulas give all sixteen coefficients as $i$ ranges from $1$ to $4$. Their dehomogenizations are precisely
\begin{equation*}
	p_{i,r}^{(1)}(x)=f_{i,r}(x,1)
	\quad (r=0,1,2),
	\qquad
	p_{i,*}^{(1)}(x)=f_{i,*}(x,1).
\end{equation*}
Equivalently,
\begin{equation*}
	f_{i,r}(x,z)=z^{7+2r}p_{i,r}^{(1)}(x/z),
	\qquad
	f_{i,*}(x,z)=z^9p_{i,*}^{(1)}(x/z).
\end{equation*}
The five derivations are therefore
\begin{equation*}
	\theta_E,\qquad\theta_r=\sum_{i=1}^{4}f_{i,r}\partial_{x_i}\text{~for~}r=0,1,2,\qquad\theta_*\sum_{i=1}^{4}f_{i,*}\partial_{x_i}.
\end{equation*}
They are homogeneous of degrees $1,7,9,11,9$, respectively.

\vspace{1mm}

For completeness, logarithmicity can be checked directly from the coefficient formulas. For every $a\in\{0,1,2,*\}$, $i<j$, $\varepsilon\in\{-1,1\}$, and $k\in\{-1,0,1\}$, substitution gives $\left.(f_{i,a}+\varepsilon f_{j,a})
\right|_{x_i=kz-\varepsilon x_j}=0$. Thus each defining form $x_i+\varepsilon x_j-kz$ divides
$f_{i,a}+\varepsilon f_{j,a}$. Together with $\theta_a(z)=0$, this proves that the four derivations belong to $D(\mathcal C_{4,1})$. In the coordinate order $(z,x_1,x_2,x_3,x_4)$, their coefficient determinant is
\begin{equation*}
\Delta=\operatorname{det}
\begin{pmatrix}
	z&0&0&0&0\\
	x_1&f_{1,0}&f_{1,1}&f_{1,2}&f_{1,*}\\
	x_2&f_{2,0}&f_{2,1}&f_{2,2}&f_{2,*}\\
	x_3&f_{3,0}&f_{3,1}&f_{3,2}&f_{3,*}\\
	x_4&f_{4,0}&f_{4,1}&f_{4,2}&f_{4,*}
\end{pmatrix}.
\end{equation*}
Since all five derivations are logarithmic, $Q_{4,1}\mid\Delta$. The equality $1+7+9+11+9=37=\deg Q_{4,1}$ implies that $\Delta$ is a constant multiple of $Q_{4,1}$. Evaluating at $(z,x_1,x_2,x_3,x_4)=(1,2,5,9,14)$ determines this constant and gives
\begin{equation*}
	\Delta=-\frac{4096}{31185}\,Q_{4,1}.
\end{equation*}
Consequently, by Saito’s criterion we get the homogeneous basis $\theta_E,\theta_0,\theta_1,\theta_2,\theta_*$ of $D(\mathcal C_{4,1})$, with exponents $\exp(\mathcal C_{4,1})=(1,7,9,9,11)$.

\vspace{3mm}

\section*{Acknowledgments}

\vspace{3mm}

The first author is supported by the Natural Science Funds of China (12201029).


\vspace{3mm}

\begin{thebibliography}{99}
	
\vspace{3mm}
	
\bibitem{P. H. Edelman-1996} P. H. Edelman, V. Reiner, Free arrangements and rhombic tilings, Discrete and Computational Geometry, 15 (1996), 307-340.
	
\bibitem{M. Feigin-2025} M. Feigin, Z. Wang, M. Yoshinaga, Integral expressions for derivations of multiarrangements, International Journal of Mathematics, 36 (2025), 2450085.
	
\bibitem{R. Gao-2026} R. Gao, Y. Si, The Bernoulli basis construction for the extended Shi and Catalan arrangements of the type $A_{\ell-1}$, Communications in Algebra, (2026), 1-14.
	
\bibitem{H. Kawanoue-2026} H. Kawanoue, On the extended Catalan arrangement of type $B_{\ell}$, Proceedings of the American Mathematical Society, 154 (2026), 1197-1206.
	
\bibitem{P. Orlik-1992} P. Orlik, H. Terao, Arrangements of hyperplanes, Grundlehren der Mathematischen Wissenschaften, vol. 300, Springer, Berlin, 1992.
	
\bibitem{A. Postnikov-2000} A. Postnikov, R. P. Stanley, Deformations of Coxeter hyperplane arrangements, Journal of Combinatorial Theory, Series A, 91 (2000), 544-597.
	
\bibitem{K. Saito-1980} K. Saito, Theory of logarithmic differential forms and logarithmic vector fields, Journal of the Faculty of Science, the University of Tokyo. Section 1A, Mathematics, 27 (1980), 265-291.
	
\bibitem{D. Suyama-2021} D. Suyama, M. Yoshinaga, The primitive derivation and discrete integrals, SIGMA, 17 (2021), 038.	
	
\bibitem{M. Yoshinaga-2004} M. Yoshinaga, Characterization of a free arrangement and conjecture of Edelman and Reiner, Inventiones Mathematicae, 157 (2004), 449-454.
	
\bibitem{G. Ziegler-1989} G. Ziegler, Multiarrangements of hyperplanes and their freeness, Singularities (Iowa City, IA, 1986), 90 (1989), 345-359.		
	
\vspace{3mm}
	
\end{thebibliography}
\end{document}